\UseRawInputEncoding
\documentclass[12pt]{article}
\usepackage[centertags]{amsmath}
\usepackage{amsfonts}
\usepackage{amssymb}
\usepackage{latexsym}
\usepackage{amsthm}
\usepackage{newlfont}
\usepackage{graphicx}
\usepackage{listings}
\usepackage{booktabs}
\usepackage{abstract}
\usepackage{enumerate}
\usepackage{xcolor}
\RequirePackage{srcltx}
\date{}

\newlength{\defbaselineskip}
\newcommand{\setlinespacing}[1]%
           {\setlength{\baselineskip}{#1 \defbaselineskip}}

\newcommand{\actaqed}{\hfill $\actabox$}
{\medskip\noindent \textit{Proof of #1. }}%
{\actaqed \medskip}

\def\cD{{\mathcal D}}

\def\cL{{\mathcal L}}

\def\cN{{\mathcal N}}

\def\bbN{{\mathbb N}}

\def\bc{\mathbf c}

\def\bx{\mathbf x}

 \def \<{\langle}
\def\>{\rangle}

\def \e{\varepsilon}

\def \ff{\varphi}
\def\al{\alpha}

\def\ga{\gamma}
\def\la{\lambda}

\def \conv{\operatorname{conv}}

\def \sp{\operatorname{span}}

\def \dist{\operatorname{dist}}
\def \Pr{\operatorname{Pr}}
\def \csp{\overline{\operatorname{span}}}

\def \fimt {\varphi^\tau_m}
\def \fmt {f^\tau_m}

\def \ft {f^\tau_{m-1}}
\def \fj {f^\tau_{j-1}}
\def \fijt {\varphi^\tau_j}

\newtheorem{Theorem}{Theorem}[section]
\newtheorem{Lemma}{Lemma}[section]

\newtheorem{Remark}{Remark}[section]

\newtheorem{Corollary}{Corollary}[section]
\numberwithin{equation}{section}

\newcommand{\be}{\begin{equation}}
\newcommand{\ee}{\end{equation}}

\begin{document}
 
 \title{On weak greedy algorithms}

\author{ A. Spivak and  V. Temlyakov}

\newcommand{\Addresses}{{
  \bigskip
  \footnotesize

\medskip
A.S. Spivak, \\ \textsc{Voronezh State University, Russia;\\ Saint Petersburg State University,  Russia.\\
\\ 
E-mail:} \texttt{alexs1nger@yandex.ru}

 \medskip
  V.N. Temlyakov, \textsc{Steklov Mathematical Institute of Russian Academy of Sciences, Russia;\\ Lomonosov Moscow State University, Russia; \\ Moscow Center of Fundamental and Applied Mathematics, Russia.\\  
  \\
E-mail:} \texttt{temlyakovv@gmail.com}

}}

\maketitle
\begin{abstract}
The main goal of this paper is twofold. First, we extend some results 
known in the case of weak greedy algorithms with a scalar parameter  to the case of weak greedy algorithms with a weakness sequence. Second, we formulate a new setting of the problem of convergence of greedy algorithms. 
Usually, we are interested in convergence of an algorithm for all elements of the space. We suggest to 
study convergence of an algorithm for a subset, which   is a generalized octahedron associated with a given dictionary.
\end{abstract}
\section{Introduction}
\label{In}

We begin with some general remarks explaining the goal of this paper and then proceed to the detailed presentation.
We study weak greedy algorithms and focus on the feature {\it weak} of these algorithms. The term weak in the name 
{\it weak greedy algorithm} means that at the most important step of the algorithm -- the greedy step -- we do not look for an optimal element from the given dictionary, which satisfies the optimal greedy inequality, but we are satisfied with an element, which satisfies a weaker than optimal inequality determined but the corresponding weakness parameter. In the general setting the weakness parameter may depend on the iteration of the algorithm. The standard notation for the weakness parameter at the $k$th iteration of the algorithm is $t_k$. For details see the definition of the Weak Greedy Algorithm below.
The weak greedy algorithms with a scalar parameter $t=t_k$, $k=1,2,\dots$, appeared in the very first papers on greedy 
approximation (see \cite{J1}). The weak greedy algorithms with a weakness sequence $\tau := \{t_k\}_{k=1}^\infty$ were introduced and studied in \cite{VT75}. The algorithms with a weakness sequence allow us to consider the following two important special cases: (I) $\tau$ is monotone, $t_1\ge t_2 \ge\dots$, with $t_k \to 0$ as $k\to \infty$, (II) for a given 
subsequence $\cN:=\{n_k\}_{k=1}^\infty$ and a scalar $t\in (0,1]$ the weakness sequence $\tau(\cN,t)$ satisfies the condition $t_n=t$ for $n\in \cN$ and $t_n=0$ otherwise. The main goal of this paper is twofold. First, we extend some results 
known in the case of weak greedy algorithms with a scalar parameter $t$ to the case of weak greedy algorithms with a weakness sequence $\tau$. Second, we formulate a new setting of the problem of convergence of greedy algorithms. 
Usually, we are interested in convergence of an algorithm for all elements, say, of a Hilbert space $H$. We suggest to 
study convergence of an algorithm for a subset $A_1(\cD)$ of $H$, where $A_1(\cD)$ is a generalized octahedron associated with a dictionary $\cD$ (see the definition below). We now proceed to the detailed presentation.

We recall some definitions from the theory of  greedy approximation. 
 Let $H$ be a real Hilbert space with an inner product $\<\cdot,\cdot\>$ and the norm $\|x\|:=\<x,x\>^{1/2}$. We say a set $\cD$ of functions (elements) from $H$ is a dictionary if each $g\in \cD$ has norm one $(\|g\|=1)$
and the closure of $\sp \cD =H.$
We give now the definition of the WGA($\cD,\tau$) (see \cite{VT75} and \cite{VTbook}, pp.81-82). Sometimes for brevity we 
write WGA($\tau$) instead of WGA($\cD,\tau$).

{\bf Weakness sequence.}   Let $\tau := \{t_k\}_{k=1}^\infty$ be a given sequence of numbers $t_k\in [0,1]$, $k=1,2,\dots$. We call such a sequence the {\it weakness sequence}. In the case $\tau = \{t\}$, i.e. $t_k=t$, $k=1,2\dots$, we write $t$ instead of $\tau$ in the notation.

   {\bf Weak Greedy Algorithm  (WGA($\cD,\tau$)).} Let a weakness sequence $\tau = \{t_k\}_{k=1}^\infty$, $0\le t_k \le 1$, be given. We define $f_0^{\tau}:=f$. Then for each $m\ge 1$, we inductively define:
$\fimt \in \cD$ is any satisfying (in the case $t_m=1$ we assume that  such a maximizer exists)
\be\label{P4}
|\<\ft,\fimt\>| \ge t_m \sup_{g\in \cD} |\<\ft,g\>|.
\ee
Then
\be\label{P5}
\fmt := \ft -\<\ft,\fimt\>\fimt
\ee
and
\be\label{P6}
G^\tau_m(f,\cD) := \sum_{j=1}^m \<\fj,\fijt\>\fijt.
\ee

For a dictionary $\cD$ consider the symmetrized dictionary $\cD^\pm := \{\pm g \,:\, g\in \cD\}$ and define $A_1(\cD)$ (generalized octahedron associated with a dictionary $\cD$) to be the closure (in $H$) of the convex hull of $\cD^\pm$. Define
$$
\|f\|_{A_1(\cD)} := \inf \{M>0:\, f/M\in A_1(\cD)\}.
$$
Clearly, $\|f\|\le \|f\|_{A_1(\cD)}$.

The main results of the paper are in Sections \ref{tb} and \ref{BS}. We only formulate one result from Section \ref{tb} (see Corollary \ref{PC1} there), which is devoted to a Hilbert space setting.

\begin{Theorem}\label{InT1} Assume $\tau :=\{t_k\}_{k=1}^\infty$ is a nonincreasing weakness sequence. For any Hilbert space $H$  we have
\be\label{In1}
\|f -G^{\tau}_m(f,\cD)\| \le \left(1+\sum^m_{k=1}t_k^2\right)^{-\al/2} \|f\|^{1-\al} \|f\|_{A_1(\cD)}^\al
\ee
provided $\al \le \frac{t_m}{t_m+2}$.
\end{Theorem}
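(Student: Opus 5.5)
The plan is to track two quantities along the algorithm: the squared residual norm and an a priori bound on the $A_1(\cD)$-norm of the residual. Write $c_m:=\<\ft,\fimt\>$, $a_m:=\|\fmt\|^2$ and
$$
B_m:=\|f\|_{A_1(\cD)}+\sum_{j=1}^m |c_j|, \qquad B_0=\|f\|_{A_1(\cD)}.
$$
By (\ref{P5}), $\fmt=f-\sum_{j\le m}c_j\fijt$, so $\|\fmt\|_{A_1(\cD)}\le B_m$. Here one may replace $\|f\|_{A_1(\cD)}$ by $\|f\|_{A_1(\cD)}+\e$ and let $\e\to 0$ at the end.

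The first step is the standard duality bound. Since $\fmt/B_m$ lies in $A_1(\cD)$, the closed convex hull of $\cD^\pm$, we get
$$
a_m=\<\fmt,\fmt\>\le B_m\sup_{g\in\cD}|\<\fmt,g\>|.
$$
Combining this with (\ref{P4}) and (\ref{P5}) gives
$$
c_m^2\ge t_m^2\frac{a_{m-1}^2}{B_{m-1}^2}, \qquad a_m=a_{m-1}-c_m^2 .
$$
Put $x_m:=a_m/B_m^2$. Since $\|\fmt\|\le\|\fmt\|_{A_1(\cD)}\le B_m$, we have $x_m\le 1$.

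The second step is the decay of $x_m$. Because $B_m\ge B_{m-1}$, we get $x_m\le x_{m-1}(1-t_m^2x_{m-1})$. Taking reciprocals and using $(1-u)^{-1}\ge 1+u$ gives $x_m^{-1}\ge x_{m-1}^{-1}+t_m^2$. Together with $x_0\le 1$ this yields
$$
x_m\le\Bigl(1+\sum_{k=1}^m t_k^2\Bigr)^{-1}.
$$

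The third step, which I expect to be the crux, is to show that $\Phi_m:=a_m^{1-\al}B_m^{2\al}$ is nonincreasing in $m$ under the condition $\al\le t_m/(t_m+2)$. Write $c_m^2=y\,a_{m-1}$ with $y\in[0,1]$. Then $a_m=(1-y)a_{m-1}$ and
$$
B_m= B_{m-1}\bigl(1+\sqrt{y\,x_{m-1}}\bigr).
$$
From $y\ge t_m^2x_{m-1}$ we get $x_{m-1}\le y/t_m^2$, hence $\sqrt{y x_{m-1}}\le y/t_m$; the case $t_m=0$ needs a separate easy argument or is excluded. Then, using $\log(1-y)\le -y$ and $\log(1+z)\le z$,
$$
\log\frac{\Phi_m}{\Phi_{m-1}}\le -(1-\al)y+\frac{2\al y}{t_m}\le 0
\iff \al\Bigl(1+\frac{2}{t_m}\Bigr)\le 1 \iff \al\le\frac{t_m}{t_m+2}.
$$
Since $\tau$ is nonincreasing, the hypothesis at step $m$ gives $\al\le t_k/(t_k+2)$ for every $k\le m$. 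Hence $\Phi_m\le\Phi_0=\|f\|^{2(1-\al)}\|f\|_{A_1(\cD)}^{2\al}$.

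Finally I would combine the two steps through the identity $a_m=x_m^{\al}\,a_m^{1-\al}B_m^{2\al}=x_m^\al\Phi_m$. This gives
$$
\|\fmt\|^2\le\Bigl(1+\sum_{k=1}^m t_k^2\Bigr)^{-\al}\|f\|^{2(1-\al)}\|f\|_{A_1(\cD)}^{2\al},
$$
and taking square roots yields (\ref{In1}).
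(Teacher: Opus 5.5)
Your proof is correct and follows essentially the paper's route (the proof of Theorem~\ref{RCT} with $b=1$): the same majorant $B_m$ of the $A_1(\cD)$-norm of the residual, the same duality bound, the same Lemma~\ref{HL1} recursion for $a_m/B_m^2$, and a nonincreasing potential of the form $a_mB_m^{\beta}$ combined with that recursion. The only difference is that you fix $\al$ and show $a_m^{1-\al}B_m^{2\al}$ is nonincreasing directly, using $\al\le t_k/(t_k+2)$ for all $k\le m$. This replaces the paper's endpoint exponent $\al_0=t_m/(t_m+2)$, its telescoping step (\ref{4.11a}), and its appeal to Lemma~2.2 of \cite{VT196} for $\al<\al_0$.
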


In the case of a scalar weakness parameter ($\tau =\{t\}$) Theorem \ref{InT1} was proved in \cite{VT196}.  We point out that Theorem \ref{InT1} has an important  new ingredient, which was introduced in the paper \cite{VT196}. Namely, the inequality (\ref{In1})   bounds the error of approximation by the product of both norms -- 
the norm of $f$ and the $A_1(\cD)$-norm of $f$. Typically, only the $A_1(\cD)$-norm of $f$ is used. 

In Section \ref{T} we illustrate the new setting  of  
studying convergence of an algorithm for a subset $A_1(\cD)$ of $H$ instead of the whole space $H$. Theorem \ref{TT1} 
is a simple observation, which gives a criterion for such convergence in a very special case, when $\cD$ is an orthonormal basis. Further discussion of this setting is given in Section \ref{D}. 

It is well known that greedy algorithms in Hilbert spaces are closely related with remote consecutive projections. The reader can find many results and a detailed discussion of the corresponding connection in a very recent nice survey \cite{BK}. 
In Section \ref{P} we present some comments in that direction. In a certain sense Section \ref{P} complements the survey \cite{BK}. For the reader's convenience we give a detailed presentation of the corresponding results. 

{\bf Novelty.} As we already pointed out above one of the main goals of the paper is to extend some results 
known in the case of weak greedy algorithms with a scalar parameter $t$ to the case of weak greedy algorithms with a weakness sequence $\tau$. This is done in Theorem \ref{RCT} (Hilbert spaces), Theorem \ref{BT1} (Banach spaces), and Theorem \ref{PT5a}. The methods of proofs of Theorems \ref{RCT} and \ref{BT1} are similar to the case of scalar weakness parameter and go back to the paper \cite{VT196}. However, the step from scalar weakness parameter to the weakness sequence requires some new technical tricks (see, for instance, inequalities (\ref{4.11}) -- (\ref{4.11b}). For this reason we present detailed proofs of these theorems. We point out that our new results Theorems \ref{RCT} and \ref{BT1} are proved under an extra assumption that the weakness sequence is monotone. We don't know how to get rid of this extra assumption. 
In Section \ref{P} we demonstrate how Theorem \ref{RCT} can be used for 
remote consecutive projections. Theorem \ref{PT7} is a new result, which contains an interesting and important new feature -- the norm of the residual is bounded by the product of two norms, 
the norm of $f$ and the $A_1(\cD)$-norm of $f$. It is the first result of this type in the theory of remote projections. Note that Theorems \ref{D3} and \ref{D4} are direct corollaries of the proofs of known results but they might be of interest and importance. These theorems are in the vein of the second main goal of the paper of studying convergence of an algorithm for a subset $A_1(\cD)$ instead of the whole space $H$. 
 
 \section{Some  results on the rate of convergence of the WGA($\cD,\tau$)} 
\label{tb}

In this section we study a somewhat more general algorithm than the WGA($\cD,\tau$) defined in the Introduction. It is known that the new tuning parameter $b$ helps to obtain better theoretical guarantees for the rate of approximation (see, for example, \cite{VTbook}, Ch.6).
 Let a sequence $\tau = \{t_k\}_{k=1}^\infty$, $0\le t_k \le 1$ and a parameter $b\in (0,1]$ be given.  We define the Weak Greedy Algorithm with parameter $b$.  

{\bf Weak Greedy Algorithm with parameter $b$ (WGA($\cD,\tau,b$))} We define $f_0:=f_0^{\tau,b}:=f$. Then for each $m\ge 1$, we inductively define:

1) $\ff_m:=\varphi^{\tau,b}_m \in \cD$ is any satisfying (in the case $t_m=1$ we assume that  such a maximizer exists)
$$
|\<f_{m-1},\varphi_m\>| \ge t_m \sup_{g\in \cD} |\<f_{m-1},g\>|;
$$

2) 
$$
f_m :=f_m^{\tau,b}:= f_{m-1} -b\<f_{m-1},\varphi_m\>\varphi_m;
$$

3)
$$
G_m(f,\cD):=G^{\tau,b}_m(f,\cD) := b\sum_{j=1}^m \<f_{j-1},\varphi_j\>\varphi_j.
$$
 In the case $t_k = t$, $k=1,2,\dots$, we write $t$ in the notation instead of $\tau$. 
 Note that WGA($\cD,\tau,1$) = WGA($\cD,\tau$).
 
 We proceed to the rate of convergence. The following Theorem \ref{T4.2} was proved in \cite{VT111}.

\begin{Theorem}[{\cite{VT111}}]\label{T4.2} Let $\cD$ be an arbitrary dictionary in $H$. Assume $\tau :=\{t_k\}_{k=1}^\infty$ is a nonincreasing weakness sequence and $b\in(0,1]$. Then for $f \in A_1(\cD)$ we have
\be\label{4.1}
\|f -G^{\tau,b}_m(f,\cD)\| \le e_m(\tau,b)
\ee
where
\be\label{4.1a}
e_m(\tau,b) := \left(1+b(2-b)\sum_{k=1}^m t^2_k\right)^{-\frac{(2-b)t_m}{2(2+(2-b)t_m)}}. 
\ee
\end{Theorem}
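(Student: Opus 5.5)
The plan is the standard two-sequence argument. We track $a_m := \|f_m\|^2$ and an auxiliary quantity measuring how far $f$ still is from the span of the chosen elements in the $A_1$-sense. By homogeneity we may assume $f\in A_1(\cD)$ and set $f_0=f$. Write $G_m=\sum_{j\le m} c_j\varphi_j$ with $c_j=b\<f_{j-1},\varphi_j\>$, and put
$$
B_m := 1+\sum_{j=1}^m |c_j|,
$$
so that $f - f_m = G_m$ has $A_1$-norm at most $B_m-1$.

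\textbf{Step 1 (one-step decrease).} From the definition,
$$
\|f_m\|^2=\|f_{m-1}\|^2-b(2-b)\<f_{m-1},\varphi_m\>^2 .
$$

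\textbf{Step 2 (lower bound on the inner product).} Since $f_{m-1}=f-G_{m-1}$ and $\sup_{g\in\cD}|\<f_{m-1},g\>|\ge \<f_{m-1},h\>/\|h\|_{A_1}$ for every $h$, we get
$$
\|f_{m-1}\|^2=\<f_{m-1},f\>-\<f_{m-1},G_{m-1}\>\le \sup_g|\<f_{m-1},g\>|\,B_{m-1}.
$$
Combined with the greedy step this gives
$$
|\<f_{m-1},\varphi_m\>|\ge t_m\|f_{m-1}\|^2/B_{m-1}.
$$

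\textbf{Step 3 (coupled recursion).} We now have two recursions:
$$
a_m\le a_{m-1}\Bigl(1-b(2-b)t_m^2\frac{a_{m-1}}{B_{m-1}^2}\Bigr),\qquad B_m=B_{m-1}+b\,|\<f_{m-1},\varphi_m\>|.
$$
Denote $x_m:=|\<f_{m-1},\varphi_m\>|$. Then
$$
a_m=a_{m-1}-b(2-b)x_m^2,\qquad B_m=B_{m-1}+bx_m,
$$
and we also know $x_m\ge t_m a_{m-1}/B_{m-1}$. The target is
$$
a_m B_m^{2\beta}\le\dots
$$
Following \cite{VT111}, I would aim to prove the monotonicity-type inequality
$$
a_m\,B_m^{\,\gamma}\le a_{m-1}B_{m-1}^{\,\gamma}\Bigl(\text{factor}\Bigr)
$$
for a suitable exponent $\gamma=\gamma(t_m,b)$. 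Equivalently, the goal is to bound $a_m$ by a power of $(1+b(2-b)\sum t_k^2)^{-1}$.

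\textbf{Step 4 (choice of weights).} Concretely, I would use $\log(1-u)\le -u$ and $\log(1+v)\le v$ and optimize over $x_m\ge t_ma_{m-1}/B_{m-1}$. This shows that $\log a_m+\gamma\log B_m$ is nonincreasing when $\gamma=2(2-b)/(bt_m)\cdot(\dots)$, and it yields
$$
a_m\le a_{m-1}\Bigl(1+b(2-b)t_m^2 \tfrac{a_{m-1}}{B^2_{m-1}}\Bigr)^{-1}.
$$
Iterating this with the standard lemma on sequences $y_m\le y_{m-1}(1-t_m^2y_{m-1})$, which gives $y_m\le(1+\sum t_k^2)^{-1}$, we then interpolate. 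The quantity $a_mB_m^{\gamma}$ is bounded by $1$, and $a_m/B_m^2$ is bounded by $(1+b(2-b)\sum t_k^2)^{-1}$. Eliminating $B_m$ between these two bounds gives
$$
a_m\le(1+b(2-b)\textstyle\sum t_k^2)^{-\gamma/(2+\gamma)},
$$
and with $\gamma=(2-b)t_m/b$-type values this produces the exponent $\frac{(2-b)t_m}{2(2+(2-b)t_m)}$ after taking square roots.

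\textbf{Main obstacle.} The main obstacle is Step 4 with a varying $t_k$. The exponent $\gamma$ depends on $t_k$, so monotonicity of $a_kB_k^{\gamma}$ must hold with the single final $\gamma=\gamma(t_m)$ for all $k\le m$. Here the nonincreasing assumption is used: $t_k\ge t_m$, so the smaller exponent associated with $t_m$ is admissible at every earlier step.
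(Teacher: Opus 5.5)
Your overall scheme is the one the paper uses in the proof of Theorem~\ref{RCT}, which follows \cite{VT111} and gives the present statement because $B_0=\|f\|_{A_1(\cD)}\le 1$. The shared ingredients are $a_m=\|f_m\|^2$, $B_m$, the bound (\ref{4.5}), Lemma~\ref{HL1} applied to $a_m/B_m^2$, a nonincreasing product $a_mB_m^{\gamma}$, and elimination of $B_m$.

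The gap is Step~4, which is the only step with real content. You never determine the admissible exponent, and the values you propose are wrong. Take $\gamma=(2-b)t_m/b$ (let alone $2(2-b)/(bt_m)\cdot(\dots)$, which blows up as $t_m\to0$). For $b<1$ the product need not decrease. If $x_k$ sits at its lower bound $t_ka_{k-1}/B_{k-1}$ and $u:=a_{k-1}/B_{k-1}^2$ is small, then
\[
\frac{a_kB_k^{\gamma}}{a_{k-1}B_{k-1}^{\gamma}}=(1-b(2-b)t_k^2u)(1+bt_ku)^{\gamma}=1+bt_ku\bigl(\gamma-(2-b)t_k\bigr)+O(u^2)>1 .
\]
Even formally, your elimination formula would then produce the exponent $\frac{(2-b)t_m}{2(2b+(2-b)t_m)}$, not the claimed one. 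Carrying out your own logarithmic estimate correctly gives
\[
\log\frac{a_kB_k^{\gamma}}{a_{k-1}B_{k-1}^{\gamma}}\le -\frac{b(2-b)x_k^2}{a_{k-1}}+\frac{\gamma b x_k}{B_{k-1}}\le \frac{bx_k}{B_{k-1}}\bigl(\gamma-(2-b)t_k\bigr),
\]
using $x_k\ge t_ka_{k-1}/B_{k-1}$. So the admissible exponents at step $k$ are exactly $\gamma\le(2-b)t_k$, and the right choice is $\gamma=(2-b)t_m$.

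With this $\gamma$, your treatment of the varying weakness is right. Since $t_k\ge t_m$ for $k\le m$, the fixed exponent $(2-b)t_m$ is admissible at every step, so $a_mB_m^{(2-b)t_m}\le a_0B_0^{(2-b)t_m}\le 1$. Put $S_m:=1+b(2-b)\sum_{k=1}^m t_k^2$. Combining with $a_m/B_m^2\le S_m^{-1}$ gives $a_m^{1+(2-b)t_m/2}\le S_m^{-(2-b)t_m/2}$, i.e.\ $\|f_m\|\le e_m(\tau,b)$.

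This fixed-exponent chaining is a slightly cleaner equivalent of the route in the proof of Theorem~\ref{RCT}. There one proves $a_kB_k^{(2-b)t_k}\le a_{k-1}B_{k-1}^{(2-b)t_k}$ via $(1-x)(1+x/a)^a\le 1$, which has the same content as your two logarithmic bounds. One then lowers the exponent from $t_{k-1}$ to $t_k$ through (\ref{4.11a}), using $B_{k-1}\ge B_0$.

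Finally, the display $a_m\le a_{m-1}(1+\cdots)^{-1}$ in Step~4 is not what feeds Lemma~\ref{HL1}. What you need is (\ref{aB}), obtained from Step~3 and $B_m\ge B_{m-1}$, applied with $s_k=b(2-b)t_k^2$ and $C_1=1$.
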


Theorem \ref{T4.2} implies the following inequality for any $f$ and any $\cD$
\be\label{4.1b}
\frac{\|f -G^{\tau,b}_m(f,\cD)\|}{\|f\|_{A_1(\cD)}} \le e_m(\tau,b) .
\ee
We now extend Theorem \ref{T4.2} to provide a bound for  
\be\label{4.1c}
 \ga_m^{\tau,b}(\al,H):= \sup_{\cD}\sup_{f\in A_1(\cD),f\neq0}\sup_{G^{\tau,b}_m(f,\cD)}\frac{\|f -G^{\tau,b}_m(f,\cD)\|}{\|f\|^{1-\al} \|f\|_{A_1(\cD)}^\al}  .
\ee
In the case $\tau = \{t_k\}$, $t_k=t$, $k=1,2,\dots$, we write $ \ga_m^{t,b}$ instead of  $\ga_m^{\tau,b}$.

The following Theorem \ref{T4.2ab} was proved in \cite{VT196}. 

\begin{Theorem}[{\cite{VT196}}]\label{T4.2ab} For any Hilbert space $H$   we have
\be\label{4.1eb}
\ga_m^{t,b}(\al,H) \le (1+mb(2-b) t^2)^{-\al/2},   
\ee
provided $\al \le \frac{(2-b)t}{(2-b)t+2}$.
\end{Theorem}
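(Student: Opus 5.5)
We need to prove Theorem \ref{T4.2ab}: for the WGA($\cD,t,b$) with scalar $t$,
$$\|f_m\| \le (1+mb(2-b)t^2)^{-\al/2}\|f\|^{1-\al}\|f\|_{A_1(\cD)}^\al, \qquad \al \le \tfrac{(2-b)t}{(2-b)t+2}.$$
The plan is to split into two regimes according to the size of $\|f\|_{A_1(\cD)}$ relative to $\|f\|$, and to use monotonicity of the residual norms together with Theorem \ref{T4.2}.

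\textbf{Normalization and basic facts.} By homogeneity we normalize $\|f\|_{A_1(\cD)}=1$; the $A_1$ norm is attained in the limit, and a standard closure argument handles $f\in A_1(\cD)$ with norm exactly $1$. Set $\|f\|=\delta\le 1$. The target bound becomes
$$\|f_m\|\le (1+mb(2-b)t^2)^{-\al/2}\,\delta^{1-\al}.$$
From step 2) of the algorithm,
$$\|f_m\|^2=\|f_{m-1}\|^2-b(2-b)\<f_{m-1},\ff_m\>^2,$$
so the norms $\|f_m\|$ are nonincreasing; in particular $\|f_m\|\le\delta$. Theorem \ref{T4.2} with $t_k=t$ gives
$$\|f_m\|\le (1+mb(2-b)t^2)^{-\beta}, \qquad \beta:=\frac{(2-b)t}{2(2+(2-b)t)}.$$
The hypothesis on $\al$ says exactly $\al/2\le\beta$.

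\textbf{Interpolating the two bounds.} Write $x:=1+mb(2-b)t^2$. We have $\|f_m\|\le\min(\delta,x^{-\beta})$. If $\delta\le x^{-1/2}$, then
$$\delta=\delta^{1-\al}\delta^{\al}\le\delta^{1-\al}x^{-\al/2},$$
and we are done. If $\delta> x^{-1/2}$, a naive interpolation $\min(\delta,x^{-\beta})\le\delta^{1-\theta}x^{-\beta\theta}$ only yields the exponent $\beta\theta$ with $\theta\le\al$. That gives $x^{-\beta\al}$, not $x^{-\al/2}$, so it is too weak.

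\textbf{The main obstacle: a refined argument for large $\delta$.} The weakness of the naive interpolation is that it does not feed the smallness of $\|f\|$ into the convergence rate. The fix is to redo the proof of Theorem \ref{T4.2} while keeping track of $\|f\|$. The key inequality from that proof is
$$\sup_g|\<f_{m-1},g\>|\ge \frac{\|f_{m-1}\|^2}{\|f\|_{A_1}+\text{(accumulated coefficients)}},$$
and the accumulated coefficients are controlled by Cauchy--Schwarz via the energy identity:
$$\sum_{j\le m}|\<f_{j-1},\ff_j\>|\le\Bigl(m\sum_j\<f_{j-1},\ff_j\>^2\Bigr)^{1/2}\le\Bigl(\frac{m}{b(2-b)}\Bigr)^{1/2}\delta.$$
This produces a recursion for $a_m:=\|f_m\|^2$ of the form
$$a_m\le a_{m-1}\Bigl(1-\frac{b(2-b)t^2a_{m-1}}{(1+bS_{m-1})^2}\Bigr),$$
where $S_{m-1}$ is the sum of the coefficients and is bounded as above in terms of $\delta$.

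Solving this recursion with the standard lemma on sequences satisfying $a_m\le a_{m-1}(1-c\,a_{m-1}/y_m)$ should give a bound of the form $a_m\le\delta^{2(1-\al)}x^{-\al}$, with the admissible range of $\al$ arising exactly as in the proof of Theorem \ref{T4.2}. I expect the technical heart to be getting the exponent right: one must balance the growth of $S_m$ (of order $\sqrt{m}\,\delta$) against the decay of $a_m$, using the condition $\al\le(2-b)t/((2-b)t+2)$. If the direct recursion proves messy, I would fall back to a two-stage argument. First run the algorithm until $\|f_k\|\le x^{-1/2}$. Then apply Theorem \ref{T4.2} to the residual $f_k$, whose $A_1$ norm is at most $1+bS_k$, and combine the two stages.
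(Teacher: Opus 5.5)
Your reduction is correct as far as it goes. The normalization, the monotonicity $\|f_m\|\le\|f\|$, the easy case $\delta\le x^{-1/2}$, and your diagnosis that interpolating $\min(\delta,x^{-\beta})$ is too weak are all right. The hard case $\delta>x^{-1/2}$ is not proved, however, and neither route you sketch for it can succeed.

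Suppose you bound the accumulated coefficients globally by Cauchy--Schwarz, $B_k:=1+bS_k\le 1+\sqrt{bk/(2-b)}\,\delta$. Feed this into $a_k\le a_{k-1}\bigl(1-b(2-b)t^2a_{k-1}B_{k-1}^{-2}\bigr)$ via Lemma \ref{HL1} with $C_1=\delta^2$ and $s_k=b(2-b)t^2B_{k-1}^{-2}$. You get only $a_m\lesssim\delta^2/\bigl(1+t^2\log(1+m\delta^2)\bigr)$, with constants depending on $b$. That is a logarithmic rate. The abstract recursion with $B_{k-1}^2\asymp k\delta^2$ genuinely admits such slow decay, so no balancing of exponents will extract $x^{-\al/2}$ from these two inputs.

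The two-stage fallback fails as well. Theorem \ref{T4.2} applied to $f_k$ only sees $\|f_k\|_{A_1(\cD)}\le B_k$, and $B_k\ge B_0=1$, so it never produces the factor $\delta^{1-\al}$. Moreover, ``run until $\|f_k\|\le x^{-1/2}$'' presupposes exactly the decay you are trying to prove.

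The missing idea is an inequality that couples the growth of $B_m$ to the decay of $a_m$ at each step, instead of bounding $B_m$ globally. Use only one factor of $y_m=|\<f_{m-1},\ff_m\>|$ against the greedy lower bound $y_m\ge t\,a_{m-1}/B_{m-1}$. This gives $a_m\le a_{m-1}\bigl(1-b(2-b)t\,y_m/B_{m-1}\bigr)$, while $B_m=B_{m-1}(1+by_m/B_{m-1})$. Apply the elementary inequality $(1-x)(1+x/a)^a\le 1$ with $x=b(2-b)t\,y_m/B_{m-1}$ and $a=(2-b)t$. This yields $a_mB_m^{(2-b)t}\le a_{m-1}B_{m-1}^{(2-b)t}\le\|f\|^2\|f\|_{A_1(\cD)}^{(2-b)t}$, and this is where $\|f\|$ enters.

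Separately, apply Lemma \ref{HL1} to $x_m:=a_mB_m^{-2}$ rather than to $a_m$, using $B_m\ge B_{m-1}$ and $C_1=1$. This gives $a_mB_m^{-2}\le x^{-1}$. Now combine the two:
$a_m^{(2-b)t+2}=(a_mB_m^{-2})^{(2-b)t}(a_mB_m^{(2-b)t})^2\le x^{-(2-b)t}\|f\|^4\|f\|_{A_1(\cD)}^{2(2-b)t}$.
This is exactly the claim for $\al=\al_0:=\frac{(2-b)t}{(2-b)t+2}$, with no case split needed. Smaller $\al$ then follow from the interpolation with $\|f_m\|\le\|f\|$ that you already wrote down.
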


We now prove the following version of Theorem \ref{T4.2ab} with the weakness parameter $t$ replaced by the weakness sequence $\tau = \{t_k\}$. 

\begin{Theorem}\label{RCT} Assume $\tau :=\{t_k\}_{k=1}^\infty$ is a nonincreasing weakness sequence. For any Hilbert space $H$   we have
\be\label{4.1e}
\ga_m^{\tau,b}(\al,H) \le \left(1+b(2-b) \sum^m_{k=1}t_k^2\right)^{-\al/2},   
\ee
provided $\al \le \frac{(2-b)t_m}{(2-b)t_m+2}$.
\end{Theorem}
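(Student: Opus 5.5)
Without loss of generality $\|f\|_{A_1(\cD)}\le 1$, normalized so that $\|f\|_{A_1(\cD)}=1$ (for $f\in A_1(\cD)$ the quotient in (\ref{4.1c}) is homogeneous of degree zero). Set $a_m:=\|f_m\|^2$, $f_m=f_m^{\tau,b}$. The plan follows the scalar argument of \cite{VT196}, now with the monotonicity of $\tau$ used to control varying $t_k$. It rests on two standard inequalities, both coming from step 2) of the algorithm and the choice of $\varphi_m$.

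\textbf{Energy decrease.} Since $\|f_m\|^2=\|f_{m-1}\|^2-b(2-b)\<f_{m-1},\varphi_m\>^2$ and $|\<f_{m-1},\varphi_m\>|\ge t_m\sup_{g}|\<f_{m-1},g\>|$, we get
$$
a_m\le a_{m-1}-b(2-b)t_m^2\sup_{g\in\cD}\<f_{m-1},g\>^2 .
$$

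\textbf{Lower bound on the sup.} Write $f_{m-1}=f-G_{m-1}$. Then $\|f_{m-1}\|^2=\<f_{m-1},f\>-\<f_{m-1},G_{m-1}\>\le \sup_g|\<f_{m-1},g\>|\,(1+b\sum_{j<m}|\<f_{j-1},\varphi_j\>|)$. Here I use $\|f\|_{A_1}\le1$ and $\|G_{m-1}\|_{A_1}\le b\sum|c_j|$. To bound $B_{m-1}:=1+b\sum_{j<m}|\<f_{j-1},\varphi_j\>|$, I use Cauchy--Schwarz together with $b(2-b)\sum_j\<f_{j-1},\varphi_j\>^2\le \|f\|^2-a_{m-1}\le\|f\|^2$. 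The key quantity is then the product of $\|f_{m-1}\|^{2}$-type terms with $B_{m-1}$.

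Following \cite{VT196}, the cleanest route is to track the joint quantity. Define $y_m:=b|\<f_{m-1},\varphi_m\>|$, so that $a_m=a_{m-1}-\frac{2-b}{b}y_m^2$ and $B_m=B_{m-1}+y_m$. The lower bound reads $y_m\ge b\,t_m a_{m-1}/B_{m-1}$. I would prove by induction the estimate
$$
a_m B_m^{2\beta_m}\le a_0B_0^{2\beta_m}\le \|f\|^2 ,\qquad \beta_m:=\frac{(2-b)t_m}{2}
$$
or a similar potential $a_m^{1}B_m^{c}$ being nonincreasing. One step reduces to an elementary inequality of the form $(1-\frac{(2-b)}{b}y^2/a)(1+y/B)^{c}\le 1$, valid when $y\ge bt\,a/B$ and $c\le (2-b)t$; this uses $\log(1-x)\le -x$. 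Monotonicity of $\tau$ enters here: the exponent required at step $k$ is governed by $t_k\ge t_m$, so the potential with the smaller constant $c$ determined by $t_m$ stays nonincreasing over all steps $k\le m$. This is where the remark about the new technical tricks (\ref{4.11})--(\ref{4.11b}) fits in.

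Separately, combining the two inequalities gives $a_{k}\le a_{k-1}(1-b(2-b)t_k^2a_{k-1}/B_{k-1}^2)$. With $B_{k-1}$ controlled via the potential (so $B_{k-1}^2\lesssim (\|f\|^2/a_{k-1})^{1/\beta}$ in terms of $a$), this yields a recursion for $a_k$. The standard lemma for sequences with $a_k\le a_{k-1}(1-t_k^2 a_{k-1}^{\gamma}\cdots)$ then gives a bound of the form $a_m\le \|f\|^{2(1-\al)}(1+b(2-b)\sum_{k\le m}t_k^2)^{-\al}$ with $\al=\frac{(2-b)t_m}{(2-b)t_m+2}$. Taking square roots gives (\ref{4.1e}). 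The case of smaller $\al$ follows since $\|f\|\le\|f\|_{A_1}=1$ and the right side is monotone in $\al$ in the needed direction: the quotient $\|f\|^{1-\al}$ decreases as $\al$ decreases only if $\|f\|\le1$, and the bracket power is at most one.

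The main obstacle is the interpolation step: obtaining a single monotone potential coupling $a_m$ and the $A_1$-norm bound $B_m$ with exponent uniform in $k\le m$ despite varying $t_k$. I would first try the potential $a_kB_k^{(2-b)t_m}$ and verify its one-step monotonicity using $t_k\ge t_m$. Then I would feed $B_{k-1}\le (\|f\|^2/a_{k-1})^{1/((2-b)t_m)}$ into the energy recursion and solve it via the known numerical-sequence lemma with weights $t_k^2$.
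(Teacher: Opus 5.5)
Your argument for the extremal exponent $\al_0:=\frac{(2-b)t_m}{(2-b)t_m+2}$ is correct, and it differs from the paper's in two places.

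First, you fix $c:=(2-b)t_m$ and use $t_k\ge t_m$ to see that $a_kB_k^{c}$ is nonincreasing for all $k\le m$. The paper instead proves (\ref{4.11}) with a step-dependent exponent and then shifts exponents through $B_0$ via (\ref{4.11a}). Your use of monotonicity is the more direct one.

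Second, the paper applies Lemma \ref{HL1} to $x_k=a_kB_k^{-2}$ to get $a_mB_m^{-2}\le(1+S)^{-1}$, where $S:=b(2-b)\sum_{k=1}^m t_k^2$. It then eliminates $B_m$ through $a_m^{c+2}=(a_mB_m^{-2})^{c}(a_mB_m^{c})^{2}$. You instead substitute $B_{k-1}\le(\|f\|^2/a_{k-1})^{1/c}$ into the energy recursion. This gives the scalar recursion $a_k\le a_{k-1}(1-s_ka_{k-1}^{\gamma})$ with $\gamma=1+2/c$ and $s_k=b(2-b)t_k^2\|f\|^{-4/c}$. It closes: apply Lemma \ref{HL1} to $x_k:=a_k^{\gamma}$ (valid since $(1-u)^{\gamma}\le 1-u$) with $C_1=\|f\|^{2\gamma}$. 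This gives $a_m^{\gamma}\le\|f\|^{4/c}(\|f\|^{-2}+S)^{-1}\le\|f\|^{4/c}(1+S)^{-1}$, which is exactly $a_m\le\|f\|^{2(1-\al_0)}(1+S)^{-\al_0}$. If you use $(1-u)^{-\gamma}\ge 1+\gamma u$ instead of Lemma \ref{HL1}, your route even gives $S/\al_0$ in place of $S$. You should write this computation out, because the step $\|f\|^{-2}\ge 1$ is exactly where the normalization $\|f\|\le\|f\|_{A_1(\cD)}=1$ enters. The Cauchy--Schwarz bound on $B_{m-1}$ you mention is never used and can be dropped.

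\textbf{The gap is the final step, the passage to $\al<\al_0$.} With $\|f\|_{A_1(\cD)}=1$ the right-hand side is $R(\al)=(1+S)^{-\al/2}\|f\|^{1-\al}=\|f\|\,q^{\al}$, where $q:=(1+S)^{-1/2}\|f\|^{-1}$. This is nonincreasing in $\al$ only if $q\le 1$. If $\|f\|<(1+S)^{-1/2}$, then $q>1$ and $R$ strictly increases with $\al$. In that case the bound at $\al_0$ is the weakest in the family and implies nothing for smaller $\al$. Your own remark that $\|f\|^{1-\al}$ decreases as $\al$ decreases points in the wrong direction, and $(1+S)^{-\al/2}\le 1$ does not compensate.

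What is missing is the trivial endpoint $\al=0$: $\|f_m\|\le\|f\|=R(0)$, since $a_k\le a_{k-1}$. Because $\log R(\al)$ is affine in $\al$, for $\al\in[0,\al_0]$ one has $R(\al)\ge\min\{R(0),R(\al_0)\}\ge\|f_m\|$. Concretely: if $q\le 1$ use monotonicity, and if $q>1$ use $\|f_m\|\le\|f\|\le\|f\|q^{\al}$. This interpolation is what the paper imports as Lemma 2.2 of \cite{VT196}.
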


\begin{proof}  Note that Theorem \ref{T4.2ab} implies Theorem \ref{RCT} with a weaker than (\ref{4.1e}) bound 
\be\label{4.1ec}
\ga_m^{\tau,b}(\al,H) \le (1+b(2-b)m t_m^2)^{-\al/2}.   
\ee
However, bounds (\ref{4.1e}) and (\ref{4.1ec}) are close asymptotically, when $t_m \to 0$.

The proof of this theorem goes along the lines of the proof of Theorem \ref{T4.2} in \cite{VT111}. Let $\cD$ be a dictionary in $H$. We introduce some notations:
$$
f_k:= f^{\tau,b}_k,\quad \varphi_k:=\varphi^{\tau,b}_k, \quad k=0,1,\dots,
$$
$$
a_m := \|f_m\|^2, \quad y_m := 
|\<f_{m-1},\varphi_m\>|, \quad
m=1,2,\dots,  
$$
and consider the sequence $\{B_n\}$ defined as follows
$$
B_0 := \|f\|_{A_1(\cD)},\quad B_m := B_{m-1} +by_m, \quad m=1,2,\dots .
$$
It is clear that $\|f_n\|_{A_1(\cD)}\le B_n$, $n=0,1,\dots$. By Lemma 3.5 from \cite{DT} (see also \cite{VTbook}, p.91, Lemma 2.17) we get
\be\label{4.2}
\sup_{g\in \cD} |\<f_{m-1},g\>| \ge \|f_{m-1}\|^2/B_{m-1}.
\ee
From here and from the equality  
$$
\|f_m\|^2 = \|f_{m-1}\|^2 -b(2-b)\<f_{m-1},\varphi_m\>^2
$$
we obtain the following relations
\be\label{4.3}
a_m = a_{m-1} - b(2-b)y_m^2, 
\ee
\be\label{4.4}
B_m = B_{m-1} +by_m, 
\ee
\be\label{4.5}
y_m \ge t_ma_{m-1}/B_{m-1}. 
\ee
From (\ref{4.3}) and (\ref{4.5}) we obtain
$$
a_m \le a_{m-1} (1-b(2-b)t_m^2a_{m-1}B_{m-1}^{-2}).
$$ 
Using that $B_{m-1}\le B_m$, we derive from here
\be\label{aB}
a_mB_m^{-2} \le a_{m-1}B_{m-1}^{-2}(1-b(2-b)t_m^2a_{m-1}B_{m-1}^{-2}).
\ee
We shall need the
following simple known lemma (see, for instance, \cite{VT211}, Lemma 14.3). 

\begin{Lemma}\label{HL1} Let a number $C_1>0$ and a sequence $\{s_k\}^\infty_{k=1}$, $s_k\ge0$, $k=1,2,...$, be given. Assume that $\{x_m\}_{m=0}^\infty$
is a sequence of non-negative
 numbers satisfying the inequalities
$$
x_0 \le C_1, \quad x_{m+1} \le x_m(1 - x_ms_{m+1}), \quad m = 0,1,2, \dots,\quad C_1>0 .
$$
Then we have for each $m$
$$
x_m \le \left(C_1^{-1}+\sum^m_{k=1} s_k\right)^{-1}.
$$
\end{Lemma}
\begin{proof} For completeness we present this simple proof here.  The proof is by induction on $m$. For $m = 0$ the statement
is true by assumption. We assume $x_m \le (C_1^{-1}+\sum^m_{k=1} s_k)^{-1}$ and prove that $x_{m+1} \le (C_1^{-1}+\sum^{m+1}_{k=1} s_k)^{-1}$. If $x_{m+1} = 0$ this statement is obvious. Assume therefore
that $x_{m+1} > 0$. Then we have 
$$
x_{m+1}^{-1} \ge x_m^{-1}(1 - x_ms_{m+1})^{-1} \ge x_m^{-1}(1 + x_ms_{m+1}) =
x_m^{-1} + s_{m+1} \ge C_1^{-1}+\sum^{m+1}_{k=1} s_k ,
$$
which implies $x_{m+1} \le (C_1^{-1}+\sum^{m+1}_{k=1} s_k)^{-1}$ . 
\end{proof}

We apply Lemma \ref{HL1} with $x_m:= a_mB_m^{-2}$. Then the inequality $\|f\| \le \|f\|_{A_1(\cD)}$ implies that we can take $C_1=1$. We set $s_k= b(2-b)t_k^2$, $k=1,2,...,$ and obtain from (\ref{aB}) and Lemma \ref{HL1}
\be\label{4.6}
a_mB_m^{-2} \le \left(1+ b(2-b)\sum_{k=1}^m t^2_k\right)^{-1}. 
\ee
Relations (\ref{4.3}) and (\ref{4.5}) imply
\be\label{4.7}
a_m \le a_{m-1}-b(2-b)y_mt_ma_{m-1}/B_{m-1} = a_{m-1}(1-b(2-b)t_my_m/B_{m-1}). 
\ee
We now need the following simple inequality: For any $x<1$ and any $a>0$ we have
\be\label{ineq}
(1-x)(1+x/a)^a \le 1.
\ee
Rewriting (\ref{4.4}) in the form
\be\label{4.9}
B_m = B_{m-1}(1+by_m/B_{m-1}) 
\ee
and using the inequality (\ref{ineq}) with $x=b(2-b)t_my_m/B_{m-1}$ and $a=(2-b)t_m$ we get from (\ref{4.7}) and (\ref{4.9}) that
\be\label{4.11}
a_mB_{m}^{(2-b)t_m} \le a_{m-1}B_{m-1}^{(2-b)t_m} .
\ee
Using monotonicity of $\{t_k\}$ (nonincreasing) and $\{B_k\}$ (increasing) we obtain
\be\label{4.11a}
B_{m-1}^{(2-b)t_m} = B_{m-1}^{(2-b)t_{m-1}}B_{m-1}^{(2-b)(t_m-t_{m-1})} \le B_{m-1}^{(2-b)t_{m-1}}B_0^{(2-b)(t_m-t_{m-1})}.
\ee
Finally,
\be\label{4.11b}
a_mB_{m}^{(2-b)t_m} \le   \|f\|^2\|f\|_{A_1(\cD)}^{(2-b)t_m}.
\ee

Combining (\ref{4.6}) and (\ref{4.11b}) we obtain
$$
a_m^{(2-b)t_m+2} \le \|f\|^4\|f\|_{A_1(\cD)}^{2(2-b)t_m} \left(1+b(2-b)\sum_{k=1}^mt_k^2\right)^{-(2-b)t_m},
$$
which completes the proof of Theorem \ref{RCT} with $\al_0 := \frac{(2-b)t_m}{(2-b)t_m+2}$.  The case $\al\le \al_0$ follows from Lemma 2.2 of \cite{VT196}.
\end{proof}

 This theorem implies that for any dictionary $\cD$, each $f$, and any realization $G^{\tau,b}_m(f,\cD)$ of the WGA($\cD,\tau,b$) we 
 have the following inequality
\be\label{P24}
\|f -G^{\tau,b}_m(f,\cD)\| \le \left(1+b(2-b) \sum^m_{k=1}t^2_k\right)^{-\al/2} \|f\|^{1-\al} \|f\|_{A_1(\cD)}^\al
\ee
provided $\al \le \frac{(2-b)t_m}{(2-b)t_m+2}$.

\begin{Corollary}\label{PC1} Assume $\tau :=\{t_k\}_{k=1}^\infty$ is a nonincreasing weakness sequence. For any Hilbert space $H$  we have
\be\label{P25}
\|f -G^{\tau,1}_m(f,\cD)\| \le \left(1+\sum^m_{k=1}t_k^2\right)^{-\al/2} \|f\|^{1-\al} \|f\|_{A_1(\cD)}^\al
\ee
provided $\al \le \frac{t_m}{t_m+2}$.
\end{Corollary}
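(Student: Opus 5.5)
This corollary is the special case $b=1$ of Theorem \ref{RCT}, in the pointwise form (\ref{P24}). The plan is to substitute $b=1$ and check that the constants and the admissible range of $\al$ specialize correctly. The algorithm itself needs no check: the paper already notes that WGA($\cD,\tau,1$) $=$ WGA($\cD,\tau$), since for $b=1$ the update $f_m=f_{m-1}-\<f_{m-1},\varphi_m\>\varphi_m$ and the approximant $G^{\tau,1}_m$ coincide with (\ref{P5}) and (\ref{P6}).

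The steps in order are as follows.
\begin{enumerate}
\item Put $b=1$ in the constants of Theorem \ref{RCT}. This gives $b(2-b)=1$, so the factor in (\ref{4.1e}) becomes $\left(1+\sum_{k=1}^m t_k^2\right)^{-\al/2}$. Likewise $(2-b)t_m=t_m$, so the admissible range becomes $\al\le \frac{t_m}{t_m+2}$.
\item Pass from the bound on $\ga_m^{\tau,1}(\al,H)$ to a pointwise inequality, exactly as in (\ref{P24}). By definition (\ref{4.1c}), $\ga_m^{\tau,1}(\al,H)$ is a supremum over dictionaries $\cD$, over nonzero $f\in A_1(\cD)$, and over all realizations of the algorithm. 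Hence
\[
\|f-G^{\tau,1}_m(f,\cD)\|\le \ga_m^{\tau,1}(\al,H)\,\|f\|^{1-\al}\|f\|_{A_1(\cD)}^{\al}
\]
holds for every such $f$.
\item Extend to arbitrary $f$. The trivial case $f=0$ is immediate. If $\|f\|_{A_1(\cD)}=\infty$, the right-hand side is infinite for $\al>0$, and for $\al=0$ the claim reduces to $\|f_m\|\le\|f\|$, which follows from (\ref{4.3}).
\item For $0<\|f\|_{A_1(\cD)}<\infty$, rescale to $f/M$ with $M=\|f\|_{A_1(\cD)}$, so that $f/M\in A_1(\cD)$ (the set is closed). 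The WGA commutes with positive scaling, because the greedy condition (\ref{P4}) is homogeneous. Both sides of (\ref{P25}) are homogeneous of degree one in $f$, so the bound for $f/M$ transfers to $f$.
\end{enumerate}

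I expect no real obstacle. The only points needing care are the scaling and homogeneity argument in the last step and the degenerate cases. The substantive work, including the use of monotonicity of $\tau$ in (\ref{4.11a}), has already been carried out in the proof of Theorem \ref{RCT}.
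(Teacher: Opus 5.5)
Your proposal is correct and is essentially the paper's argument. The corollary is the case $b=1$ of Theorem \ref{RCT} in its pointwise form (\ref{P24}), where $b(2-b)=1$ and $(2-b)t_m=t_m$; your scaling and degenerate-case remarks only spell out the passage from $\ga_m^{\tau,1}(\al,H)$ to arbitrary $f$, which the paper asserts in one line just before (\ref{P24}).
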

 
 \begin{Remark}\label{tbR1} In the proof of Theorem \ref{RCT} we only used the greedy step 1) of the WGA($\cD,\tau,b$) for proving the inequality (\ref{4.5}). Therefore, for the Thresholding Weak Greedy Algorithm TWGA($\cD,\tau,b$), which is 
 the  WGA($\cD,\tau,b$) with the greedy step 1) replaced by the step
 $$
 |\<f_{m-1},\varphi_m\>| \ge t_ma_{m-1}/B_{m-1},
 $$
 Theorem \ref{RCT} holds as well. 
 \end{Remark}
 
 \section{Some  results on the rate of convergence of the DGA($\tau, b, \mu$)} 
\label{BS}

In this section we extend some results from the previous section to the case of a Banach space instead of a Hilbert space. We begin with some definitions. Let $X$ be a real Banach space with norm $\|\cdot\|$. As above we say that a set of elements (functions) $\cD$ from $X$ is a dictionary (symmetric dictionary) if each $g\in \cD$ has norm   one ($\|g\|= 1$), and $\csp \cD =X$. In addition we assume for convenience that
the dictionary is symmetric
$$
g\in \cD \quad \text{implies} \quad -g \in \cD.
$$
 It turns out that the geometrically defined class, namely, the closure of the convex hull of $\cD$, which we denote by $A_1(\cD)$, is a very natural class. For each $f\in X$ we associate the following norm
$$
\|f\|_{A_1(\cD)} := \inf \{M>0:\, f/M\in A_1(\cD)\}.
$$
Clearly, $\|f\|\le \|f\|_{A_1(\cD)}$. 

For a nonzero element $f\in X$ we denote by $F_f$ a norming (peak) functional for $f$: 
$$
\|F_f\| =1,\qquad F_f(f) =\|f\|.
$$
The existence of such a functional is guaranteed by Hahn-Banach theorem. 
Denote 
$$
r_\cD(f) := \sup_{F_f}\sup_{g\in \cD}F_f(g).
$$
We note that in general a norming functional $F_f$ is not unique. This is why we take $\sup_{F_f}$ over all norming functionals of $f$ in the definition of $r_\cD(f)$. It is known that in the case of uniformly smooth Banach spaces (our primary object here) the norming functional $F_f$ is unique. In such a case we do not need $\sup_{F_f}$ in the definition of $r_\cD(f)$.

 We consider here approximation in uniformly smooth Banach spaces. For a Banach space $X$ we define the modulus of smoothness
$$
\rho(u) := \rho(u,X):= \sup_{\|x\|=\|y\|=1}\left(\frac{1}{2}(\|x+uy\|+\|x-uy\|)-1\right).
$$
A uniformly smooth Banach space is  one with the property
$$
\lim_{u\to 0}\rho(u)/u =0.
$$
It is well known that in the case $X=L_p$, 
$1\le p < \infty$ we have
\be\label{1.3}
\rho(u,L_p) \le \begin{cases} u^p/p & \text{if}\quad 1\le p\le 2 ,\\
(p-1)u^2/2 & \text{if}\quad 2\le p<\infty. \end{cases} 
\ee

  We now give a definition of the DGA$(\tau,b,\mu)$, $\tau =\{t_k\}_{k=1}^\infty$, $t_k \in (0,1]$ introduced in \cite{VT111} (see also \cite{VTbook}, Ch.6).  
  
{\bf Dual Greedy Algorithm with parameters $(\tau,b,\mu)$ (DGA$(\tau,b,\mu)$).}
Let $X$ be a uniformly smooth Banach space with the modulus of smoothness $\rho(u)$ and let $\mu(u)$ be a continuous majorant of $\rho(u)$: $\rho(u)\le\mu(u)$, $u\in[0,\infty)$ with the property $\lim_{u\to +0}\mu(u)/u =0$. For a sequence $\tau =\{t_k\}_{k=1}^\infty$, $t_k \in (0,1]$ and a parameter $b\in (0,1]$ we define sequences
$\{f_m\}_{m=0}^\infty$, $\{\ff_m\}_{m=1}^\infty$, $\{c_m\}_{m=1}^\infty$, and $\{G_m\}_{m=0}^\infty$ inductively. Let $f_0:=f$ and $G_0:=0$. If for $m\ge 1$ $f_{m-1}=0$ then we set $f_j=0$ for $j\ge m$ and stop. If $f_{m-1}\neq 0$ then we conduct the following three steps:

1) take any $\ff_m \in \cD$ such that  (in the case $t_m=1$ we assume that  such a maximizer exists)
\be\label{3.1}
F_{f_{m-1}}(\ff_m) \ge t_mr_\cD(f_{m-1}); 
\ee

2) choose $c_m>0$ from the equation
\be\label{3.2}
\|f_{m-1}\|\mu(c_m/\|f_{m-1}\|) = \frac{t_mb}{2}c_mr_\cD(f_{m-1}); 
\ee

3) define
\be\label{3.3}
f_m:=f_{m-1}-c_m\ff_m,\qquad G_m:=G_m^{\tau,b,\mu}:= G_{m-1}+c_m\ff_m. 
\ee

Along with the algorithm DGA$(\tau,b,\mu)$ we consider a slight modification of it, when at step 2) we find 
$c_m$ from the  equation (see \cite{VT111}, Remark 3.1)
\be\label{3.2a}
\|f_{m-1}\|\mu(c_m/\|f_{m-1}\|) = \frac{b}{2}c_mF_{f_{m-1}}(\ff_m). 
\ee
We denote this modification by DGA$(\tau,b,\mu)^*$.

\begin{Remark}\label{BR1A} We now discuss when we can run the algorithm DGA$(\tau,b,\mu)$ for any $f\in X$. We begin with the step (\ref{3.1}). It is clear from the definition of the $r_\cD(f_{m-1})$ that in the case $t_m\in [0,1)$ we can always find 
a $\ff_m \in \cD$ satisfying (\ref{3.1}). In the case $t_m=1$ we need to impose an additional assumption that such $\ff_m \in \cD$ exists. We proceed to the step (\ref{3.2}). Consider the function $s(u):= \mu(u)/u$ defined on $(0,+\infty)$. Our assumptions guarantee that $s$ is a continuous function with the properties
$$
s(2) \ge \rho(2)/2 \ge 1/2 \quad \text{and} \quad \lim_{u\to +0}s(u) =0.
$$
On the other hand 
$$
\frac{t_mb}{2}r_\cD(f_{m-1}) \le 1/2.
$$
Therefore a solution of (\ref{3.2}) exists. 
\end{Remark}

We proceed to studying the rate of convergence of the DGA$(\tau,b,\mu)$ in the uniformly smooth Banach spaces with the power type majorant of the modulus of smoothness: $\rho(u)\le \mu(u)= \ga u^q$, $1<q\le 2$. The following Theorem \ref{T3.1} is from \cite{VT111} (see also \cite{VTbook}, p.372).

\begin{Theorem}[{\cite{VT111}}]\label{T3.1} Let $\tau :=\{t_k\}_{k=1}^\infty$ be a nonincreasing weakness sequence $1\ge t_1\ge t_2 \dots >0$ and $b\in (0,1)$. Assume that $X$ has a modulus of smoothness $\rho(u)\le \ga u^q$, $q\in (1,2]$. Denote $\mu(u) = \ga u^q$. Then for any dictionary $\cD$ and any $f\in A_1(\cD)$ the rate of convergence of the DGA$(\tau,b,\mu)$ is given by 
$$
\|f_m\|\le C(b,\ga,q)\left(1+\sum_{k=1}^mt_k^p\right)^{-\frac{t_m(1-b)}{p(1+t_m(1-b))}}, \quad p:= \frac{q}{q-1}.
$$
\end{Theorem}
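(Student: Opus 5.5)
This is the Banach-space analogue of Theorem \ref{T4.2}. I would prove it by the same scheme as Theorem \ref{RCT}: track two quantities, the residual norm $a_m:=\|f_m\|$ and an $A_1$-bound $B_m:=\|f\|_{A_1(\cD)}+\sum_{k\le m}c_k$, which satisfies $\|f_m\|_{A_1(\cD)}\le B_m$ and $B_0\le 1$.

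\textbf{Two lower bounds from the greedy step.} Write $r_{m-1}:=r_\cD(f_{m-1})$.
\begin{enumerate}
\item Since $f_{m-1}/B_{m-1}\in A_1(\cD)$, the norming functional gives $\|f_{m-1}\| = F_{f_{m-1}}(f_{m-1})\le B_{m-1} r_{m-1}$, so $r_{m-1}\ge a_{m-1}/B_{m-1}$. This is the analogue of (\ref{4.2}).
\item From the definition of $\rho$,
$$\|f_{m-1}-c\ff_m\|\le \|f_{m-1}\|\bigl(1+2\rho(c/\|f_{m-1}\|)\bigr)-cF_{f_{m-1}}(\ff_m)\cdot(\text{appropriately}),$$
i.e. the standard inequality $\|x-cg\|\le \|x\|-cF_x(g)+2\|x\|\rho(c/\|x\|)$. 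Using (\ref{3.1}) and the choice (\ref{3.2}) of $c_m$, this yields
$$a_m\le a_{m-1}-t_m c_m r_{m-1}(1-b)=a_{m-1}\Bigl(1-(1-b)t_m\frac{c_m r_{m-1}}{a_{m-1}}\Bigr).$$
\end{enumerate}

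\textbf{Monotone quantity.} Combine this with $B_m=B_{m-1}(1+c_m/B_{m-1})$ and $r_{m-1}\ge a_{m-1}/B_{m-1}$. Then $x:=(1-b)t_mc_m/B_{m-1}$ satisfies $a_m\le a_{m-1}(1-x)$. Inequality (\ref{ineq}) with exponent $a=(1-b)t_m$ gives $a_mB_m^{(1-b)t_m}\le a_{m-1}B_{m-1}^{(1-b)t_m}$. The monotonicity trick of (\ref{4.11a}), using $t_k$ nonincreasing and $B_k\ge B_0$, then chains these to
$$a_m B_m^{(1-b)t_m}\le \|f\|\,B_0^{(1-b)t_m}\le 1.$$

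\textbf{Decay of $a_m/B_m$.} Solve (\ref{3.2}) with $\mu=\ga u^q$ explicitly: $c_m = a_{m-1}\bigl(t_mb\, r_{m-1}/(2\ga)\bigr)^{1/(q-1)}$. Then
$$a_m\le a_{m-1}\Bigl(1-C\,t_m^{p}\,r_{m-1}^{p}\Bigr)\le a_{m-1}\Bigl(1-C\,t_m^p(a_{m-1}/B_{m-1})^p\Bigr).$$
Setting $x_m:=(a_m/B_m)^p$ and using $B_m\ge B_{m-1}$, we get $x_m\le x_{m-1}(1-Ct_m^px_{m-1})$ after raising to the power $p$ (using $(1-y)^p\le 1-y$). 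Lemma \ref{HL1} then gives
$$(a_m/B_m)^p\le C'\Bigl(1+\sum_{k\le m}t_k^p\Bigr)^{-1}.$$

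\textbf{Combining.} Set $\beta=(1-b)t_m$. Eliminate $B_m$ between $a_m\le C'' B_m(1+\sum t_k^p)^{-1/p}$ and $a_m\le B_m^{-\beta}$. Raising the first to the power $\beta$ and multiplying by the second gives
$$a_m^{1+\beta}\le C\Bigl(1+\sum_{k\le m} t_k^p\Bigr)^{-\beta/p},$$
which is exactly the exponent $\frac{t_m(1-b)}{p(1+t_m(1-b))}$.

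\textbf{Main obstacle.} The delicate step is the smoothness estimate in the first part and the explicit constant bookkeeping, including checking that $c_m$ stays admissible and that $1-b>0$ is needed. I would verify it first with the norming-functional inequality $\|x-cg\|+\|x+cg\|\le 2\|x\|(1+\rho(c/\|x\|))$ together with $\|x+cg\|\ge F_x(x+cg)$.
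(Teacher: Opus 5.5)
Your proof is correct and follows essentially the same argument as the paper. The paper only cites this theorem, but its proof of Theorem~\ref{BT1} runs your scheme: (\ref{3.9}), (\ref{3.12})--(\ref{3.14b}), (\ref{3.17})--(\ref{3.19}), then eliminating $B_m$, which gives the stated bound once $\|f\|\le\|f\|_{A_1(\cD)}\le 1$ and $1+c\sum t_k^p\ge \min(1,c)\,(1+\sum t_k^p)$; the one cosmetic difference is that you get the monotone quantity $a_mB_m^{(1-b)t_m}$ from (\ref{ineq}) rather than from Bernoulli's inequality $(1+x)^\alpha\le 1+\alpha x$.
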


\begin{Remark}\label{BR1} It is pointed out in \cite{VT111}, Remark 3.2, that Theorem \ref{T3.1} holds 
for the algorithm DGA$(\tau,b,\mu)^*$ as well.
\end{Remark}

Theorem \ref{T3.1} is an analog of Theorem \ref{T4.2}. We now prove an analog of Theorem \ref{RCT}. 

We extend Theorem \ref{T3.1} to provide a bound for  
\be\label{B1}
 \ga_m^{\tau,b,\mu}(\al,X):= \sup_{\cD}\sup_{f\in A_1(\cD), f\neq 0}\sup_{G^{\tau,b,\mu}_m(f,\cD)}\frac{\|f -G^{\tau,b,\mu}_m(f,\cD)\|}{\|f\|^{1-\al} \|f\|_{A_1(\cD)}^\al}  .
\ee
The corresponding characteristic for the algorithm DGA$(\tau,b,\mu)^*$ is denoted by $\ga_m^{\tau,b,\mu}(\al,X)^*$. 
Note that in the case $\tau =\{t_k\}$, $t_k=t$, $k=1,2,\dots$, Theorem~\ref{BT1} is proved in \cite{VT196}. 

\begin{Theorem}\label{BT1}  Let $\tau :=\{t_k\}_{k=1}^\infty$ be a nonincreasing weakness sequence $1\ge t_1\ge t_2 \dots >0$. For any Banach space $X$ with modulus of smoothness $\rho(u,X) \le \ga u^q$, $1<q\le 2$,  $p:= \frac{q}{q-1}$, we have
\be\label{B2}
\ga_m^{\tau,b,\mu}(\al,X) \le \left(1+c\sum^m_{k=1}t^p_k\right)^{-\al/p}, \quad  c := (1-b)\left(\frac{b}{2\ga}\right)^{\frac{1}{q-1}}, 
\ee
provided $\al \le \frac{t_m(1-b)}{1+t_m(1-b)}$. The same inequality holds for the $\ga_m^{\tau,b,\mu}(\al,X)^*$.
\end{Theorem}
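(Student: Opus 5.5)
We follow the scheme of the proof of Theorem \ref{RCT}, with the Hilbert-space identity for $\|f_m\|^2$ replaced by the smoothness estimate. Set $f_k:=f_k^{\tau,b,\mu}$, $a_m:=\|f_m\|$, $B_0:=\|f\|_{A_1(\cD)}$, and $B_m:=B_{m-1}+c_m$. Then $\|f_m\|_{A_1(\cD)}\le B_m$ and $B_m$ is nondecreasing. The Banach analog of (\ref{4.2}) (Lemma 6.10 in \cite{VTbook}) gives
$$
r_\cD(f_{m-1})\ge \|f_{m-1}\|/\|f_{m-1}\|_{A_1(\cD)} \ge a_{m-1}/B_{m-1}.
$$
Apply the definition of $\rho$, the choice (\ref{3.2}) of $c_m$, and (\ref{3.1}):
$$
\|f_m\|\le \|f_{m-1}\|\Bigl(1-\frac{c_mF_{f_{m-1}}(\ff_m)}{\|f_{m-1}\|}+2\mu\bigl(c_m/\|f_{m-1}\|\bigr)\Bigr)\le a_{m-1}\bigl(1-(1-b)t_m c_m r_\cD(f_{m-1})/a_{m-1}\bigr).
$$
Combined with the lower bound on $r_\cD$ this gives
$$
a_m\le a_{m-1}\bigl(1-(1-b)t_mc_m/B_{m-1}\bigr).
$$
For DGA$(\tau,b,\mu)^*$ the same bound holds, since (\ref{3.2a}) makes the loss factor $(1-b)c_mF_{f_{m-1}}(\ff_m)/a_{m-1}$, and then (\ref{3.1}) applies.

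\textbf{Step 1 (product bound).} Write $B_m=B_{m-1}(1+c_m/B_{m-1})$ and use (\ref{ineq}) with $x=(1-b)t_mc_m/B_{m-1}$ and $a=(1-b)t_m$. This yields
$$
a_mB_m^{(1-b)t_m}\le a_{m-1}B_{m-1}^{(1-b)t_m}.
$$
Now iterate exactly as in (\ref{4.11a})--(\ref{4.11b}). Since $t_k$ is nonincreasing and $B_k\ge B_0$,
$$
B_{m-1}^{(1-b)t_m}\le B_{m-1}^{(1-b)t_{m-1}}B_0^{(1-b)(t_m-t_{m-1})}.
$$
Telescoping therefore gives
$$
a_mB_m^{(1-b)t_m}\le \|f\|\,\|f\|_{A_1(\cD)}^{(1-b)t_m}.
$$

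\textbf{Step 2 (ratio bound).} We need $a_m^pB_m^{-p}\le (1+c\sum_{k=1}^m t_k^p)^{-1}$, which follows from Lemma \ref{HL1} with $x_m=(a_m/B_m)^p$, $C_1=1$ and $s_k=ct_k^p$. From (\ref{3.2}) with $\mu(u)=\ga u^q$ we get $\ga c_m^{q-1}a_{m-1}^{1-q}=\frac{t_mb}{2}r_\cD(f_{m-1})\ge \frac{t_mb}{2}a_{m-1}/B_{m-1}$, hence
$$
c_m\ge \Bigl(\frac{bt_m}{2\ga}\Bigr)^{1/(q-1)}a_{m-1}^{p/q}B_{m-1}^{-1/(q-1)}\cdot a_{m-1}^{\,\cdot}
$$
(a power computation, arranged so that $c_m/B_{m-1}\ge (t_mb/(2\ga))^{1/(q-1)}(a_{m-1}/B_{m-1})^{p-1}$). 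Inserting this into the recursion and using $B_m\ge B_{m-1}$ gives
$$
a_m/B_m\le (a_{m-1}/B_{m-1})\bigl(1-(1-b)t_m^{p}(b/2\ga)^{1/(q-1)}(a_{m-1}/B_{m-1})^{p}\bigr).
$$
Raise this to the power $p\ge1$ and use $(1-y)^p\le 1-y$, which turns it into the form required by Lemma \ref{HL1} with $s_k=ct_k^p$. For the $^*$ version we need an upper bound on $c_m$ as well, or we redo the computation with $F_{f_{m-1}}(\ff_m)\ge t_m a_{m-1}/B_{m-1}$ directly; this works the same way.

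\textbf{Step 3 (combine).} Raise the Step 1 bound to the power $p$ and multiply by the Step 2 bound raised to the power $(1-b)t_m$:
$$
a_m^{p+(1-b)t_mp}\le \|f\|^p\|f\|_{A_1(\cD)}^{(1-b)t_mp}\Bigl(1+c\sum_{k=1}^m t_k^p\Bigr)^{-(1-b)t_m}.
$$
This is the claim for $\al_0=\frac{(1-b)t_m}{1+(1-b)t_m}$. The case $\al\le\al_0$ then follows from Lemma 2.2 of \cite{VT196}, since $\|f_m\|\le\|f\|$, i.e. the bound interpolates with the trivial one.

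The main obstacle is the exponent bookkeeping in Step 2: the lower bound on $c_m$ from (\ref{3.2}) has to produce exactly the factor $c\,t_m^p$ with the stated constant $c$ and the power $(a/B)^p$. The same bookkeeping must then be checked for the $^*$ variant.
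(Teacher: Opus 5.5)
Your proposal is correct and follows the paper's proof essentially step for step. It uses the same recursion $a_m\le a_{m-1}(1-(1-b)t_mc_m/B_{m-1})$, the same product bound telescoped via monotonicity of $\{t_k\}$ and $B_k\ge B_0$, the same lower bound on $c_m$ fed into Lemma~\ref{HL1} with $s_k=ct_k^p$, and the same combination and interpolation. The only cosmetic difference is that you use (\ref{ineq}) where the paper uses $(1+x)^{\al}\le 1+\al x$ together with $(1-y)(1+y)\le 1$.

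Two small corrections. First, the parenthetical in Step~2 should read $c_m/a_{m-1}\ge (t_mb/(2\ga))^{1/(q-1)}(a_{m-1}/B_{m-1})^{p-1}$, equivalently $c_m/B_{m-1}\ge (t_mb/(2\ga))^{1/(q-1)}(a_{m-1}/B_{m-1})^{p}$. As written, with $B_{m-1}$ in the denominator, it is false in general, although your next display correctly uses the power $p$. Second, the $^*$ variant needs no upper bound on $c_m$: (\ref{3.2a}) together with $F_{f_{m-1}}(\ff_m)\ge t_m r_\cD(f_{m-1})\ge t_m a_{m-1}/B_{m-1}$ gives the same lower bound on $c_m$ and the same recursion.
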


\begin{proof} The proof is identical for both characteristics $\ga_m^{\tau,b,\mu}(\al,X)$ and $\ga_m^{\tau,b,\mu}(\al,X)^*$. We carry it out for the $\ga_m^{\tau,b,\mu}(\al,X)$. From the definition of the modulus of smoothness we have
\be\label{2.11}
\|f_{n-1}-c_n\ff_n\|+\|f_{n-1}+c_n\ff_n\| \le 2\|f_{n-1}\|(1+\rho(c_n/\|f_{n-1}\|)). 
\ee
Using the definition of $\ff_n$:
\be\label{2.12}
F_{f_{n-1}}(\ff_n) \ge t_nr_\cD(f_{n-1}) 
\ee
we get
\be\label{2.13}
\|f_{n-1}+c_n\ff_n\|\ge  F_{f_{n-1}}(f_{n-1}+c_n\ff_n) 
\ee
$$
= \|f_{n-1}\| +c_n F_{f_{n-1}}(\ff_n) \ge \|f_{n-1}\| +c_nt_nr_\cD(f_{n-1}). 
$$
Combining (\ref{2.11}) and (\ref{2.13}) we obtain
\be\label{2.14}
\|f_n\| = \|f_{n-1}-c_n\ff_n\| \le \|f_{n-1}\|(1+2\rho(c_n/\|f_{n-1}\|)) -c_nt_nr_\cD(f_{n-1}). 
\ee
Using the choice of $c_m$ we get from here
\be\label{3.9}
\|f_m\|\le \|f_{m-1}\| -t_m(1-b)c_mr_\cD(f_{m-1}). 
\ee
Thus, we need to estimate from below $c_mr_\cD(f_{m-1})$. It is clear that 
\be\label{3.10}
\|f_{m-1}\|_{A_1(\cD)} = \left\|f-\sum_{j=1}^{m-1}c_j\ff_j\right\|_{A_1(\cD)} \le \|f\|_{A_1(\cD)} +\sum_{j=1}^{m-1}c_j. 
\ee
Denote $B_n:= \|f\|_{A_1(\cD)}+\sum_{j=1}^nc_j$. Then by (\ref{3.10}) we have
$$
\|f_{m-1}\|_{A_1(\cD)} \le B_{m-1}.
$$
Next, by a well known relation (see, for instance, \cite{VTbook}, p.343, Lemma 6.10) we obtain
$$
r_\cD(f_{m-1}) = \sup_{g\in \cD} F_{f_{m-1}}(g) = \sup_{\ff \in A_1(\cD)} F_{f_{m-1}}(\ff) 
$$
\be\label{3.11}
\ge \|f_{m-1}\|_{A_1(\cD)}^{-1}F_{f_{m-1}}(f_{m-1}) \ge \|f_{m-1}\|/B_{m-1}.
\ee
Substituting (\ref{3.11}) into (\ref{3.9}) we get
\be\label{3.12}
\|f_m\| \le \|f_{m-1}\|(1-t_m(1-b)c_m/B_{m-1}). 
\ee
From the definition of $B_m$ we find
$$
B_m = B_{m-1} +c_m = B_{m-1}(1+c_m/B_{m-1}).
$$
Using the inequality
$$
(1+x)^\alpha \le 1+\alpha x, \quad 0\le \alpha\le 1, \quad x\ge 0,
$$
we obtain
\be\label{3.13}
B_m^{t_m(1-b)}  \le  B_{m-1}^{t_m(1-b)}(1+t_m(1-b)c_m/B_{m-1}). 
\ee
Multiplying (\ref{3.12}) and (\ref{3.13})  we get
\be\label{3.14}
\|f_m\|B_m^{t_m(1-b)}  \le \|f_{m-1}\| B_{m-1}^{t_m(1-b)}  . 
\ee
Using monotonicity of $\{t_k\}$ (nonincreasing) and $\{B_k\}$ (increasing), we obtain
\be\label{3.14a}
 B_{m-1}^{t_m(1-b)}\le  B_{m-1}^{t_{m-1}(1-b)}B_0^{(t_m-t_{m-1})(1-b)}.
\ee
Finally, we obtain
\be\label{3.14b}
\|f_m\|B_m^{t_m(1-b)} \le   \|f\| \|f\|_{A_1(\cD)}^{t_m(1-b)}.
\ee

The function $\mu(u)/u = \ga u^{q-1}$ is increasing on $[0,\infty)$. Therefore the $c_m$ from (\ref{3.2}) is greater than or equal to $c_m'$ from (see (\ref{3.11}))
\be\label{3.15}
\ga \|f_{m-1}\|(c_m'/\|f_{m-1}\|)^q = \frac{t_mb}{2}c_m'\|f_{m-1}\|/B_{m-1}, 
\ee
\be\label{3.16}
c_m' = \left(\frac{t_mb}{2\ga}\right)^{\frac{1}{q-1}}\frac{\|f_{m-1}\|^{\frac{q}{q-1}}}{B_{m-1}^{\frac{1}{q-1}}}. 
\ee
Using notations
$$
p:=\frac{q}{q-1},\qquad c := (1-b)\left(\frac{b}{2\ga}\right)^{\frac{1}{q-1}},
$$
we get from (\ref{3.9}), (\ref{3.11}), (\ref{3.16})
\be\label{3.17}
\|f_m\|\le \|f_{m-1}\| \left(1-ct^p_m\frac{\|f_{m-1}\|^p}{B_{m-1}^p}\right). 
\ee
Noting that $B_m\ge B_{m-1}$ we derive from (\ref{3.17}) that
\be\label{3.18}
(\|f_m\|/B_m)^p \le (\|f_{m-1}\|/B_{m-1})^p (1-ct^p_m(\|f_{m-1}\|/B_{m-1})^p). 
\ee
Taking into account that $\|f\|\le \|f\|_{A_1(\cD)}$ we obtain from (\ref{3.18}) by Lemma \ref{HL1} with $C_1=1$, $s_k=ct_k^p$, $k=1, 2,...,$
\be\label{3.19}
(\|f_m\|/B_m)^p \le \left(1+c \sum^m_{k=1}t_k^p\right)^{-1}. 
\ee
Combining (\ref{3.14b}) and (\ref{3.19}) we obtain
$$
\|f_m\|\le \|f\|^{1-\al_0}\|f\|_{A_1(\cD)}^{\al_0}\left(1+c\sum^m_{k=1} t^p_k\right)^{-\al_0/p}
$$
with
$$
  p:= \frac{q}{q-1},\quad \al_0 := \frac{t_m(1-b)}{1+t_m(1-b)}.
$$
This completes the proof of Theorem \ref{BT1} for $\al=\al_0$. The case $\al <\al_0$ follows from 
the case $\al=\al_0$ and the corresponding analog of Lemma 2.2 from \cite{VT196}.
\end{proof}

\section{Convergence of the Weak Thresholding Greedy Algorithm for $A_1(\cD)$}
\label{T}

In this section we discuss the new setting  of  
studying convergence of an algorithm for a subset $A_1(\cD)$ of $X$ instead of the whole space $X$. 
 We are interested in the following general convergence problems.
 
 {\bf Problem 1 (individual dictionary).} For a given dictionary $\cD$ find necessary and sufficient conditions on the weakness sequence $\tau$, which guarantee convergence of a specific weak greedy algorithm GA($\tau$) with respect to $\cD$ for each element $f\in A_1(\cD)$.
 
 {\bf Problem 2 (all dictionaries).} Find necessary and sufficient conditions on the weakness sequence $\tau$, which guarantee convergence of a specific weak greedy algorithm GA($\tau$) with respect to any dictionary $\cD$  for each element $f\in A_1(\cD)$.

We discuss here {\bf Problem 1} on a very special example, when the dictionary $\cD$ is a basis in a Banach space $X$ or 
an orthonormal basis in a Hilbert space $H$. For further discussion see Section \ref{D}. 

We give a definition of the Weak Thresholding Greedy Algorithm in a general Banach space. 
 
 We begin with the definition of the Thresholding Greedy Algorithm (TGA), which is the Weak Thresholding Greedy Algorithm
 with the weakness parameter $t=1$.  Let a Banach space $X$, with a basis $\Psi =\{\psi_k\}_{k=1}^\infty$,    be given. We assume that $\|\psi_k\|\ge C>0$, $k=1,2,\dots$,  and consider the following theoretical greedy algorithm. For a given element $f\in X$ we consider the expansion
\begin{equation}\label{BIn1}
f = \sum_{k=1}^\infty c_k(f,\Psi)\psi_k. 
\end{equation}
For an element $f\in X$ we say that a permutation $\rho$ of the positive integers   is  decreasing  if 
\begin{equation}\label{BIn2}
|c_{k_1}(f,\Psi) |\ge |c_{k_2}(f,\Psi) | \ge \dots ,  
\end{equation}
 where $\rho(j)=k_j$, $j=1,2,\dots$, and write $\rho \in D(f)$.
If the   inequalities are strict in (\ref{BIn2}), then $D(f)$ consists of only one permutation. We define the $m$th greedy approximant of $f$, with regard to the basis $\Psi$ corresponding to a permutation $\rho \in D(f)$, by the formula
$$
G_m(f):=G_m(f,\Psi)  :=G_m(f,\Psi,\rho) := \sum_{j=1}^m c_{k_j}(f,\Psi)\psi_{k_j}.
$$

We now give the definition of the Weak Thresholding Greedy Algorithm
 with weakness sequence $\tau$ (WTGA($\tau$)), which was introduced and studied in \cite{KamTem} (see also \cite{VTbook}, pp. 42-43). At the first iteration we find $k_1$ such that 
 $$
 |c_{k_1}(f,\Psi) | \ge t_1\max_k|c_{k}(f,\Psi) |
 $$
 and define
 $$
 G_1^\tau(f):=G_1(f,\Psi,\tau) := c_{k_1}(f,\Psi)\psi_{k_1},\quad f_1^\tau := f-G_1^\tau(f).
 $$
 At the $m$th iteration ($m>1$) we find $k_m$ such that 
 $$
 |c_{k_m}(f_{m-1},\Psi) | \ge t_m \max_k|c_{k}(f_{m-1},\Psi) |
 $$
 and define
 $$
 G_m^\tau(f):=G_m(f,\Psi,\tau) :=G_{m-1}^\tau(f) +c_{k_m}(f_{m-1},\Psi)\psi_{k_m},\quad f_m^\tau := f-G_m^\tau(f).
 $$
Clearly, $c_{k_m}(f_{m-1},\Psi) = c_{k_m}(f,\Psi)$.

   Consider the case, when $X$ is a Hilbert space $H$ and the dictionary $\cD$ is an orthonormal basis of $H$. 
 It is easy to see that in this case the following three greedy algorithms coincide: the Weak Thresholding Greedy Algorithm
 with weakness sequence $\tau$ (WTGA($\tau$)), defined above, the Weak Greedy Algorithm (WGA($\tau$)) (see, \cite{VTbook}, p.80), and the Weak Orthogonal Greedy Algorithm (WOGA($\tau$)) (see, \cite{VTbook}, p.81). 
 
 \begin{Theorem}\label{TT1} Let $\Psi$ be an orthonormal basis for a Hilbert space $H$. The following condition 
 \be\label{T1}
 \sum_{k=1}^\infty t_k =\infty
 \ee
 is the necessary and sufficient condition on the weakness sequence $\tau$ for convergence of the WTGA($\tau$)  with respect to $\Psi$ for each element $f\in A_1(\Psi)$.
  \end{Theorem}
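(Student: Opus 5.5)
Here $A_1(\Psi)$ for an orthonormal basis is exactly $\{f:\sum_k|c_k(f)|\le 1\}$, since $\ell_1$-balls of coefficients are closed in $H$. For $f\in A_1(\Psi)$ the WTGA($\tau$) picks distinct indices $k_1,k_2,\dots$, and the residual is $f_m=\sum_{k\notin\{k_1,\dots,k_m\}}c_k\psi_k$. I would prove the two directions separately.

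\textbf{Sufficiency.} Assume $\sum t_k=\infty$ and $f\in A_1(\Psi)$, so $\sum_k|c_k|\le 1$. Set $M_m:=\max_k|c_k(f_m)|$, which is nonincreasing in $m$. At step $m$ the selected coefficient satisfies $|c_{k_m}|\ge t_mM_{m-1}$, and distinct coefficients are removed each time, so
$$
\sum_{m=1}^\infty t_mM_{m-1}\le \sum_{m}|c_{k_m}|\le \sum_k|c_k|\le 1 .
$$
Since $\sum t_m=\infty$ and $M_m$ is monotone, this forces $M_m\to0$. Now fix $\e>0$ and choose $N$ with $\sum_{k>N}|c_k|^2<\e$. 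For $m$ large enough that $M_m<\min\{|c_k|: k\le N,\ c_k\neq0\}$, every nonzero $c_k$ with $k\le N$ has already been selected. Hence $\|f_m\|^2\le\sum_{k>N}|c_k|^2<\e$, and $f_m\to0$.

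\textbf{Necessity.} Assume $\sum t_k<\infty$. I would build $f\in A_1(\Psi)$ and a realization of the WTGA($\tau$) that fails to converge. Take $c_1=1/2$, and choose positive $c_k$ for $k\ge2$ with $c_k\ge t_{k-1}/2$ and $\sum_{k\ge2}c_k\le 1/2$; for instance $c_k=\max\{t_{k-1}/2,\ \text{tiny}\}$, suitably normalized (see the obstacle below). Let the algorithm choose $k_m=m+1$ at step $m$. This is admissible because the maximum coefficient of $f_{m-1}$ is $c_1=1/2$, and we need $c_{m+1}\ge t_m/2$, which holds. Then $\psi_1$ is never selected, so $\|f_m\|\ge 1/2$ for all $m$ and the algorithm does not converge.

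\textbf{Main obstacle: normalization in the necessity construction.} We need $\sum_{k\ge2}c_k\le 1/2$ while keeping $c_k\ge t_{k-1}\cdot c_1$. To handle this, scale $c_1$ down to a value $a>0$ chosen so that $a\left(1+\sum_k t_k\right)\le 1$, and put $c_{k+1}=a t_k+\delta_k$ with small $\delta_k>0$ so that all coefficients are nonzero. If some $t_k=0$, any choice at that step is admissible, so this case causes no trouble. With these choices the residual norm stays at least $a>0$, which completes the necessity direction.
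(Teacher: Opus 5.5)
Your argument is essentially the paper's. For sufficiency, the paper uses the same contradiction argument to get $\|\bc(f^\tau_m)\|_\infty\to 0$. It then finishes with $\|f^\tau_m\|^2=\|\bc(f^\tau_m)\|_2^2\le\|\bc(f^\tau_m)\|_\infty\|\bc(f^\tau_m)\|_1\le\|\bc(f^\tau_m)\|_\infty$. That is a bit quicker than your tail argument and gives the quantitative bound $\|f^\tau_m\|^2\le M_m$. Your tail argument is the one the paper uses in Remark \ref{TR1} for general bases, and it is equally valid here. For necessity, the paper uses exactly your rescaled example with $\delta_k=0$, namely $(1+\sum_k t_k)^{-1}\bigl(\psi_1+\sum_{k\ge2}t_{k-1}\psi_k\bigr)$, with the same realization $k_m=m+1$.

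One correction concerns the perturbations $\delta_k>0$. They are unnecessary: zero coefficients are harmless, as you already note for $t_k=0$. Worse, as written they can make your realization inadmissible. Your justification ``the maximum coefficient of $f_{m-1}$ is $c_1$'' was made for the unnormalized attempt and fails after rescaling. If $t_k=1$ for some $k$, which is allowed for finitely many $k$ even when $\sum_k t_k<\infty$, then $c_{k+1}=a+\delta_k>a=c_1$. For instance, take $t_1=t_2=1$ and $\delta_1<\delta_2$. Step $1$ must then select a largest coefficient, but $c_2=a+\delta_1<a+\delta_2=c_3$, so $k_1=2$ is not a legal choice. Taking $\delta_k=0$ fixes this, since every $c_{k+1}=at_k\le a=c_1$ and $c_{m+1}=at_m\ge t_m\max_k|c_k(f_{m-1})|$. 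So does any choice keeping all $c_{k+1}\le c_1$, such as $c_{k+1}=\max\{at_k,\delta_k\}$ with summable $\delta_k\le a$ and $a$ readjusted.
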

  \begin{Remark}\label{TR1a} We formulated Theorem \ref{TT1} for the algorithm WTGA($\tau$). This theorem holds for the 
  algorithms WGA($\tau$) and WOGA($\tau$) as well.
  \end{Remark}
  \begin{proof} Denote for brevity 
  $$
  c_k(f) := c_k(f,\Psi)\quad  \text{and}\quad  \bc (f):= \{c_k(f)\}_{k=1}^\infty.
  $$
  For a sequence $\bx=\{x_k\}_{k=1}^\infty$ use the standard notations ($1\le p <\infty$)
  $$
  \|\bx\|_p  := \left(\sum_{k=1}^\infty |x_k|^p\right)^{1/p},\quad \|\bx\|_\infty := \sup_k|x_k|.
  $$
  It is easy to understand that in our case relation $f\in A_1(\Psi)$ means that $\|\bc (f)\|_1 \le 1$. 
  
  {\bf Sufficiency.} Suppose that the condition (\ref{T1}) is satisfied. First, we prove that 
  \be\label{T2}
  \lim_{m\to\infty} \|\bc(f^\tau_m)\|_\infty =0.
  \ee
  We prove it by contradiction. It is clear that the sequence $\{\|\bc(f^\tau_m)\|_\infty\}$ is monotone decreasing. Therefore,
  assuming that (\ref{T2}) does not hold, we obtain that there exists an $a>0$ such that $\|\bc(f^\tau_m)\|_\infty \ge a$ for all $m$.
  Then
  $$
  \sum_{k=1}^\infty |c_k(f)| \ge \sum_{j=1}^\infty |c_{k_j}(f)| \ge a \sum_{j=1}^\infty t_j =\infty.
  $$
  We obtain the contradiction, which completes the proof of (\ref{T2}). Next,
  $$
  \|f_m^\tau\|_2^2 = \|\bc(f_m^\tau)\|_2^2 \le  \|\bc(f_m^\tau)\|_\infty \|\bc(f_m^\tau)\|_1.
  $$
  Our assumption $f\in A_1(\Psi)$ guarantees that $\|\bc(f_m^\tau)\|_1 \le \|\bc(f)\|_1 \le 1$. 
  Therefore, (\ref{T2}) implies that $ \|f_m^\tau\|_2 \to 0$ with $m\to \infty.$
  
  {\bf Necessity.} Suppose that condition (\ref{T1}) does not hold, which means that 
   \be\label{T3}
 \sum_{k=1}^\infty t_k < \infty.
 \ee
 Consider the function
 $$
 f_0 := \psi_1+\sum_{k=2}^\infty t_{k-1} \psi_k.
 $$
 Then we have  $f^* := f_0\left(1+\sum_{k=1}^\infty t_k\right)^{-1} \in A_1(\Psi)$. 
 For the function $f_0$ one of the realizations of the WTGA($\tau$) is the following
 $$
 G_m^\tau(f_0) = \sum_{k=2}^{m+1} t_{k-1}\psi_k.
 $$
 Clearly, this realization does not converge to $f_0$. 
  
\end{proof}

\begin{Remark}\label{TR1} Theorem \ref{TT1} holds for a Banach space $X$ and any basis $\Psi$
satisfying the conditions $0<C_1\le \|\psi_k\| \le C_2<\infty$. 
\end{Remark} 
\begin{proof} The proof of necessity repeats the corresponding one from Theorem \ref{TT1}. 
The proof of (\ref{T2}) in the sufficiency part goes for any basis. The relation (\ref{T2}) implies 
that for any $N$ there exists $m(N)$ such that $c_k(f_{m(N)})=0$ provided $k\le N$. Then,
\be\label{T4}
\|f_{m(N)}\| \le \sum_{k=N+1}^\infty |c_k(f)|\|\psi_k\| \le C_2 \sum_{k=N+1}^\infty |c_k(f)|.
\ee
It remains to note that the right hand side of (\ref{T4}) goes to zero when $N$ goes to $\infty$. 

\end{proof}
 
 \section{Applications of weak greedy algorithms for remote consecutive projections}
 \label{P}
 
 We refer the reader to the survey \cite{BK} for a detailed discussion of known results in the theory of consecutive projections and relations between the theory of greedy approximation and the theory of consecutive projections. 
 We begin with the definition of the Weak Remote Projections Algorithm. Let $H$ be a Hilbert space with the unit sphere $S(H):=\{x\in H:\|x\|=1\}$ and 
 $\cL:= \{L\}$ be a collection of closed subspaces of $H$. 
 
 {\bf Weak Remote Projections Algorithm (WRPA($\cL,\tau$)).} We start with $x_0\in H$. The first iteration of this algorithm 
 provides $x_1:= \Pr_{L_1}(x_0)$, which is the projection of $x_0$ onto a subspace $L_1\in \cL$ satisfying the condition
  (in the case $t_1=1$ we assume that  such a maximizer exists)
 \be\label{P1}
 \dist(x_0,L_1) \ge t_1\sup_{L\in \cL}  \dist(x_0,L).
 \ee
At the $n$th iteration ($n>1$) we build $x_n := \Pr_{L_n}(x_{n-1})$, where a subspace $L_n\in \cL$ satisfies the condition
(in the case $t_n=1$ we assume that  such a maximizer exists)
 \be\label{P2}
 \dist(x_{n-1},L_n) \ge t_n\sup_{L\in \cL}  \dist(x_{n-1},L).
 \ee

 In our further discussion we assume that $\cap_{L\in \cL} L = \{0\}$. With each $L\in\cL$ we associate the system 
 $\cD_L := L^\perp\cap S(H)$ and define the dictionary $\cD(\cL) := \cup_{L\in \cL}\cD_L$. Note that it is easy to see that 
 the condition $\cap_{L\in \cL} L = \{0\}$ implies that the closure of $\sp\{\cD(\cL)\}$ coincides with $H$ and, therefore, 
 $\cD(\cL)$ is a dictionary. Note that 
 \be\label{P2a}
 \Pr_L(x) = x-\Pr_{L^\perp}(x).
 \ee
 It follows from the definition of $\cD_L$ that there exists $g\in \cD_L$ such that $\Pr_{L^\perp}(x)= \<x,g\>g$. In other words,
 for any $L\in\cL$ there exists $g\in \cD_L$ such that
 \be\label{P3}
 \Pr_L(x) = x-\<x,g\>g.
 \ee
 
  Relation (\ref{P3}) establishes a connection between the WRPA($\cL,\tau$) and weak greedy algorithms. 
  
\begin{Remark}\label{PR1} Relation (\ref{P3}) and the obvious relation
\be\label{P7}
\dist(x,L) = \|P_{L^\perp}(x)\| = |\<x,g\>|
\ee
show that the WRPA($\cL,\tau$) is a realization of the WGA($\cD(\cL),\tau$).
\end{Remark}

 \subsection{Some known results on greedy algorithms in Hilbert spaces}
 \label{GA}
 
   The following theorem was   proved in \cite{VT75} (see also, \cite{VTbook}, p.85).
\begin{Theorem}[{\cite{VT75}}]\label{PT1} Assume
\be\label{P8}
\sum_{k=1}^\infty \frac{t_k}{k} = \infty.  
\ee
Then for any dictionary $\cD$ and any $f\in H$ we have for the WGA($\cD,\tau$)
$$
\lim_{m\to \infty}\|f-G_m^\tau(f,\cD)\| =0.
$$
\end{Theorem}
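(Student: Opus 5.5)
This is the classical Temlyakov convergence theorem for the WGA under $\sum_k t_k/k=\infty$. I would prove it by a Cauchy-sequence argument for the residuals.

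\emph{Basic identities.} Write $f_m:=f^\tau_m$ and $a_m:=\<f_{m-1},\varphi^\tau_m\>$. By (\ref{P5}),
$$
\|f_m\|^2=\|f_{m-1}\|^2-a_m^2 .
$$
Hence $\|f_m\|$ decreases to some limit $\beta\ge 0$, and $\sum_m a_m^2\le \|f\|^2<\infty$. It suffices to show that $\{f_m\}$ is Cauchy. Indeed, if $f_m\to u$, then the following holds.
\begin{enumerate}
\item From (\ref{P4}), $t_m\sup_{g}|\<f_{m-1},g\>|\le |a_m|$.
\item Since $\sum t_m/m=\infty$ and $\sum a_m^2<\infty$, we get $\liminf_m\, m|a_m|/t_m=0$ along indices with $t_m>0$. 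Otherwise $|a_m|\ge c\,t_m/m$ eventually, and Cauchy--Schwarz-type reasoning contradicts the hypothesis. More carefully, $\sum_m t_m/m \le \bigl(\sum a_m^2\bigr)^{1/2}\bigl(\sum (t_m/(m a_m))^2 a_m^2\bigr)^{1/2}$-style estimates; the cleanest route is that if $|a_m|\ge c\,t_m/m$ then $t_m/m\le a_m^2 m/(c^2 t_m)$, which is not immediately useful, so I would instead use the standard lemma below.
\item The sup then tends to zero along a subsequence, so $\<u,g\>=0$ for all $g\in\cD$, and therefore $u=0$ by density of $\sp\cD$.
\end{enumerate}

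\emph{The Cauchy estimate.} For $n<m$,
$$
\|f_n-f_m\|^2=\|f_n\|^2-\|f_m\|^2-2\<f_n-f_m,f_m\>,
\qquad
|\<f_n-f_m,f_m\>|\le \sum_{k=n+1}^{m}|a_k|\,|\<f_m,\varphi_k\>|\le \frac{|a_{m+1}|}{t_{m+1}}\sum_{k\le m}|a_k| .
$$
So the goal is to find a subsequence $m_j$ along which
$$
\frac{|a_{m_j+1}|}{t_{m_j+1}}\sum_{k\le m_j}|a_k|\to0 .
$$
Then $\{f_{m_j}\}$ is Cauchy, because $\|f_n\|^2-\|f_m\|^2\to0$. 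Since $\|f_m\|$ is monotone, the convergence of the subsequence to $u$ gives $\|u\|=\beta$. The orthogonality argument above then applies along $m_j$, giving $u=0$, hence $\beta=0$ and the whole sequence tends to $0$.

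\emph{Key lemma.} The main obstacle is the following statement: if $\sum a_k^2<\infty$ and $\sum t_k/k=\infty$, then
$$
\liminf_{m}\ \frac{|a_{m+1}|}{t_{m+1}}\sum_{k=1}^m|a_k|=0 .
$$
I would prove it by contradiction. Set $S_m:=\sum_{k\le m}|a_k|$; note $S_m\le \sqrt m\,(\sum a_k^2)^{1/2}$. Suppose $|a_{m+1}|\ge c\,t_{m+1}/S_m$ for all large $m$. Then $S_{m+1}-S_m\ge c\,t_{m+1}/S_m$, so
$$
S_{m+1}^2-S_m^2\ge 2c\,t_{m+1},
$$
and also
$$
a_{m+1}^2\ge c^2t_{m+1}^2/S_m^2\ge c'\,t_{m+1}^2/m .
$$
This alone does not contradict $\sum t_k/k=\infty$ directly, so I would combine the two facts. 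We have
$$
\sum_k a_k^2\ \ge\ c\sum_k \frac{|a_k|\,t_k}{S_{k-1}} .
$$
Using $S_{k-1}\le C\sqrt{k}$ and Cauchy--Schwarz to relate $\sum t_k/k$ to $\bigl(\sum t_k^2/k\bigr)^{1/2}(\log)^{1/2}$ fails in general. The better route is a dyadic decomposition. On blocks $[2^s,2^{s+1})$, the condition $\sum_s\bigl(2^{-s}\sum_{\text{block}}t_k\bigr)=\infty$ together with $S_m^2\gtrsim \sum_{k\le m}t_k$ and $\sum a_k^2\ge \sum_k c^2t_k^2/S_{k-1}^2$ should force divergence via the Hölder inequality $\bigl(\sum_{\text{block}}t_k\bigr)^2\le 2^s\sum_{\text{block}}t_k^2$. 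I expect this combinatorial inequality (essentially Lemma 2.3 in \cite{VTbook}, Section 2.3) to be the crux; everything else is routine.
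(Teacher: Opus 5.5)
\paragraph{Verdict: the reduction is right, the key lemma is not proved.}
Your reduction is sound. The identity $\|f_n-f_m\|^2=\|f_n\|^2-\|f_m\|^2-2\<f_n-f_m,f_m\>$, together with (\ref{P4}), reduces the theorem to your key lemma, and the subsequence/orthogonality argument then finishes. This is the mechanism behind the sufficiency part of Theorem \ref{DT1}, to which the paper reduces via property (\ref{II}).

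The gap is the key lemma itself. You rightly call it the crux, but you do not prove it, and the dyadic route you sketch cannot prove it:
\begin{itemize}
\item The only upper bound on $S_{k-1}$ you use is $S_{k-1}\le\|f\|\sqrt{k}$. Inserted into $a_k^2\ge c^2t_k^2/S_{k-1}^2$, this gives only $\sum_k t_k^2/k<\infty$.
\item After your block H\"older step you get only $\sum_s x_s^2<\infty$, where $x_s:=2^{-s}\sum_{2^s\le k<2^{s+1}}t_k$. By contrast, (\ref{P8}) is equivalent to $\sum_s x_s=\infty$.
\item These are compatible. For $t_k=1/\log(k+1)$ one has $\sum t_k/k=\infty$ but $\sum t_k^2/k<\infty$.
\item The other fact, $S_m^2\gtrsim\sum_{k\le m}t_k$, is a lower bound on $S_m$, so it points the wrong way. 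It is also automatically consistent with $S_m\le\|f\|\sqrt m$, because $t_k\le 1$.
\end{itemize}
Bounding $S_{k-1}$ pointwise by $\sqrt k$ discards exactly the information that is needed.

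\paragraph{The missing idea: Hardy's inequality.}
This is how the paper proves the lemma (Theorem \ref{PT5a} with $n_k=k$, cf.\ Remark \ref{PR2}). Put $b_n:=\frac1n\sum_{j\le n}|a_j|$. Hardy's inequality gives $\|\{b_n\}\|_{\ell_2}\le 2\|\{a_n\}\|_{\ell_2}$, so by Cauchy--Schwarz
\[
\sum_{n=1}^\infty\frac{|a_n|}{n}\sum_{j\le n}|a_j|=\sum_{n=1}^\infty|a_n|\,b_n\le 2\sum_{n=1}^\infty a_n^2\le 2\|f\|^2 .
\]
Suppose $\frac{|a_n|}{t_n}\sum_{j\le n}|a_j|\ge c>0$ for all large $n$ with $t_n>0$. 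Then $t_n/n\le c^{-1}|a_n|b_n$ for all large $n$ (trivially when $t_n=0$). Hence $\sum_n t_n/n<\infty$, contradicting (\ref{P8}). Since $\sum_{j\le m}|a_j|\le\sum_{j\le m+1}|a_j|$, this is your lemma with $n=m+1$, and the rest of your argument then goes through.

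Your dyadic instinct can be rescued, but only by a block version of the same averaging, not by the H\"older inequality on the $t_k$. With $\rho_s:=\bigl(\sum_{2^s\le k<2^{s+1}}a_k^2\bigr)^{1/2}$, the contradiction hypothesis gives $c\,x_s\le\rho_s\sum_{r\le s}2^{-(s-r)/2}\rho_r$. The right-hand side is summable in $s$ by a convolution bound.
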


 Theorem \ref{PT1} and Remark \ref{PR1} imply the following result.
 
 \begin{Theorem}\label{PT2} Assume that the weakness sequence $\tau$ satisfies (\ref{P8}). 
Then for any collection $\cL$ satisfying the condition $\cap_{L\in \cL} L = \{0\}$  we have for the WRPA($\cL,\tau$)
$$
\lim_{n\to \infty}\|x_n\| =0.
$$
\end{Theorem}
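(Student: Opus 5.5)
The plan is to reduce Theorem \ref{PT2} to Theorem \ref{PT1} via Remark \ref{PR1}, by showing that every run of the WRPA($\cL,\tau$) is a realization of the WGA($\cD(\cL),\tau$) started at $f:=x_0$.

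First, under $\cap_{L\in\cL}L=\{0\}$, the set $\cD(\cL)$ is a dictionary. Each $g\in\cD(\cL)$ lies in $S(H)$, so $\|g\|=1$. For density of the span, suppose $y\perp\cD(\cL)$. Then $y\perp L^\perp$ for every $L\in\cL$. Since $L$ is closed, this gives $y\in (L^\perp)^\perp=L$ for every $L$, hence $y\in\cap_{L\in\cL}L=\{0\}$. Therefore $\csp\cD(\cL)=H$.

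Next, I show by induction on $n$ that $x_n=f^\tau_n$ for a suitable choice of the elements $\varphi^\tau_n$. The base case is $x_0=f=f^\tau_0$.

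Assume $x_{n-1}=f^\tau_{n-1}$, and let $L_n$ satisfy (\ref{P2}). Two cases arise.
\begin{itemize}
\item If $\Pr_{L_n^\perp}(x_{n-1})\neq0$, take $g:=\Pr_{L_n^\perp}(x_{n-1})/\|\Pr_{L_n^\perp}(x_{n-1})\|\in\cD_{L_n}$. Then (\ref{P2a}) gives $x_n=x_{n-1}-\<x_{n-1},g\>g$, and $\dist(x_{n-1},L_n)=|\<x_{n-1},g\>|$.
\item If $\Pr_{L_n^\perp}(x_{n-1})=0$, take any $g\in\cD_{L_n}$; such $g$ exists unless $L_n=H$. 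Then $\<x_{n-1},g\>=0$ and $x_n=x_{n-1}$, which is again consistent with (\ref{P5}). The degenerate case $L_n=H$ (so $L_n^\perp=\{0\}$ and $x_n=x_{n-1}$) is not covered by this; I only flag it here and deal with it in the final paragraph.
\end{itemize}

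The key identity is
$$
\sup_{h\in\cD(\cL)}|\<x,h\>|=\sup_{L\in\cL}\dist(x,L).
$$
It holds because, for $h\in\cD_L$, we have $|\<x,h\>|\le\|\Pr_{L^\perp}x\|=\dist(x,L)$, with equality for the normalized projection. Combined with (\ref{P2}), this identity shows that $g$ satisfies the greedy condition (\ref{P4}) with parameter $t_n$. Setting $\varphi^\tau_n:=g$ then gives $f^\tau_n=x_n$. When $t_n=1$, the assumed maximizing subspace yields a maximizing $g$, so existence is not an issue there.

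Finally, Theorem \ref{PT1} applies to every realization of the WGA($\cD(\cL),\tau$) under (\ref{P8}). Hence $\|x_n\|=\|f-G^\tau_n(f,\cD(\cL))\|\to0$. The main obstacle is bookkeeping for the degenerate steps: steps where $x_{n-1}$ already lies in $L_n$, and the case $L_n=H$. I plan to handle them as follows.
\begin{itemize}
\item If $\sup_L\dist(x_{n-1},L)=0$, then $x_{n-1}\in\cap L=\{0\}$, so the statement is trivial from that point on.
\item Otherwise, a step with zero distance is a legitimate WGA step only when the greedy condition (\ref{P4}) allows coefficient $0$, i.e.\ when $t_n=0$. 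Such steps contribute nothing to the convergence argument of Theorem \ref{PT1}, which remains valid since it allows arbitrary $t_k\in[0,1]$.
\end{itemize}
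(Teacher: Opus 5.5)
Your proposal is correct and follows the paper's route. The paper derives Theorem \ref{PT2} in one line from Theorem \ref{PT1} and Remark \ref{PR1}, and you supply the details it leaves implicit: that $\cD(\cL)$ is a dictionary, the identity $\sup_{h\in\cD(\cL)}|\<x,h\>|=\sup_{L\in\cL}\dist(x,L)$, and the degenerate steps.

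The one loose point is the case $L_n=H$. There no $g\in\cD(\cL)$ need be orthogonal to $x_{n-1}$, so the step is not literally a WGA step. Rather than appealing to the internals of the proof of Theorem \ref{PT1}, you can delete these stall steps, which have $t_n=0$ and $x_n=x_{n-1}$ whenever $x_{n-1}\neq 0$. By (\ref{P8}), infinitely many steps remain (unless some $x_n=0$). Then apply Theorem \ref{PT1} to the re-indexed run: its weakness sequence $t'_k=t_{n_k}$ satisfies $\sum_k t'_k/k\ge\sum_k t_{n_k}/n_k=\sum_n t_n/n=\infty$, and monotonicity of $\|x_n\|$ finishes the argument.
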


 The following theorem, which is stronger than Theorem \ref{PT1}, was proved in \cite{VT81}.
\begin{Theorem}[{\cite{VT81}}]\label{PT3} Assume
\be\label{P9}
\sum_{s=0}^\infty \left(2^{-s}\sum_{k=2^s}^{2^{s+1}-1}t_k^2\right)^{1/2} = \infty.  
\ee
Then for any dictionary $\cD$ and any $f\in H$ we have for the WGA($\cD,\tau$)
$$
\lim_{m\to \infty}\|f-G_m^\tau(f,\cD)\| =0.
$$
\end{Theorem}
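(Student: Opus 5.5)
We need to prove Theorem \ref{PT3}: if condition (\ref{P9}) holds, then the WGA($\cD,\tau$) converges for every $f\in H$ and every dictionary $\cD$. This is a known result from \cite{VT81}. The plan follows the standard scheme for Theorem \ref{PT1}, with the Cauchy--Schwarz step sharpened to work on dyadic blocks.

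\textbf{Setup.} Write $f_m:=\fmt$ and $c_j:=|\<f_{j-1},\varphi_j^\tau\>|$. Orthogonality of the update (\ref{P5}) gives
$$
\|f_m\|^2=\|f\|^2-\sum_{j=1}^m c_j^2,
$$
so $\sum_j c_j^2\le\|f\|^2<\infty$ and $\|f_m\|$ decreases to some limit $\beta\ge 0$. We argue by contradiction and assume $\beta>0$.

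\textbf{Step 1: the residuals form a Cauchy sequence along a subsequence.} For $n<m$ we have
$$
\|f_n-f_m\|^2=\|f_n\|^2-\|f_m\|^2+2\<f_n-f_m,f_m\>,
$$
where $f_n-f_m=\sum_{j=n+1}^m \<f_{j-1},\varphi_j^\tau\>\varphi_j^\tau$. By the greedy step (\ref{P4}),
$$
|\<f_n-f_m,f_m\>|\le \sum_{j=n+1}^m c_j\sup_g|\<f_m,g\>|\le \frac{c_{m+1}}{t_{m+1}}\sum_{j\le m}c_j .
$$
It is therefore enough to find indices $m$ with $\frac{c_{m+1}}{t_{m+1}}\sum_{j\le m}c_j\to0$. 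Once we have them, $f_m$ converges along that subsequence to some limit $f^*$. Since $\|f_m\|\to\beta$, the whole sequence then converges to $f^*$, with $\|f^*\|=\beta$.

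\textbf{Step 2: $f^*=0$, which gives the contradiction.} For every fixed $g\in\cD$ we have $|\<f_m,g\>|\le c_{m+1}/t_{m+1}$. Along the chosen subsequence this tends to $0$, provided the quantity $c_{m+1}/t_{m+1}$ itself tends to $0$ there (we arrange this in Step 3). Hence $\<f^*,g\>=0$ for all $g\in\cD$, and since the span of $\cD$ is dense, $f^*=0$. This contradicts $\beta>0$.

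\textbf{Step 3 (the main obstacle): choosing the good indices.} We need infinitely many $m$ with
$$
c_{m+1}\sum_{j\le m}c_j\le \e_m t_{m+1},\qquad \e_m\to0,
$$
using only $\sum c_j^2<\infty$ and (\ref{P9}). Suppose instead that $c_{m+1}\ge \delta t_{m+1}/S_m$ for all large $m$, where $S_m:=\sum_{j\le m}c_j$. Summing over the dyadic block $[2^s,2^{s+1})$ and applying Cauchy--Schwarz blockwise gives
$$
S_{2^s}\le \sum_{j<2^s}c_j\le \sum_{r<s}2^{r/2}\Bigl(\sum_{j\in[2^r,2^{r+1})}c_j^2\Bigr)^{1/2}.
$$
Write $\sigma_r:=\sum_{j\in[2^r,2^{r+1})}c_j^2$ and $\theta_r:=\bigl(2^{-r}\sum_{k\in[2^r,2^{r+1})}t_k^2\bigr)^{1/2}$, so that (\ref{P9}) says $\sum_r\theta_r=\infty$. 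Squaring the assumed lower bound on $c_{m+1}$ and summing over block $s$ gives, since $S_m\le S_{2^{s+1}}$ there,
$$
\sigma_s\ge \delta^2 2^s\theta_s^2/S_{2^{s+1}}^2,
\qquad\text{so}\qquad
2^{s/2}\theta_s\le \delta^{-1}S_{2^{s+1}}\sigma_s^{1/2}.
$$
Combining this with the Cauchy--Schwarz bound on $S_{2^s}$ should produce a recursive inequality that forces $\sum_s\theta_s<\infty$ (for instance through $\sum\sigma_s<\infty$ and a Cauchy--Schwarz argument across blocks), contradicting (\ref{P9}). Making this bookkeeping close, and in particular handling the growth of $S_{2^{s+1}}$ against $2^{s/2}$, is the delicate part. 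If the direct argument does not close, I would first try splitting into blocks where $S_{2^{s+1}}\lesssim 2^{s/2}\sigma$-mass and blocks where it is not, following the structure of the proof in \cite{VT81}.
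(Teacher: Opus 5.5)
Your Steps 1 and 2 are the standard argument, essentially the sufficiency half of the criterion in Theorem \ref{DT1}, and they are fine. The requirement $c_{m+1}/t_{m+1}\to 0$ along the good indices is automatic, because $S_m$ is eventually bounded below by a positive constant. The genuine gap is Step 3. It is the only place where (\ref{P9}) enters, and you leave it open.

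The missing ingredient is Hardy's inequality: averages $\frac1n\sum_{j\le n}a_j$ of a nonnegative $\ell_2$ sequence are again in $\ell_2$. The paper uses this as (\ref{3.3.1}) in its proof of Theorem \ref{PT5a}. In your dyadic notation it reads $\{2^{-s/2}S'_s\}_{s\ge0}\in\ell_2$, where $S'_s:=\sum_{j<2^{s+1}}c_j$. Use $S'_s$ in place of $S_{2^{s+1}}$: it dominates $S_m$ for $m+1\in[2^s,2^{s+1})$, and it removes the off-by-one in your bound for $S_{2^s}$.

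The naive bound $S'_s\le 2^{(s+1)/2}\|f\|$ only gives $\theta_s\lesssim\delta^{-1}\|f\|\sigma_s^{1/2}$, and $\sum_s\sigma_s^{1/2}$ need not converge. That is exactly why your bookkeeping does not close. No recursion or case splitting is needed; the square-summability of $2^{-s/2}S'_s$ is what handles the growth of $S'_s$ against $2^{s/2}$.

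Concretely, your blockwise Cauchy--Schwarz already yields
\[
2^{-s/2}S'_s\le\sum_{r=0}^{s}2^{-(s-r)/2}\sigma_r^{1/2}.
\]
This is a convolution of $\{\sigma_r^{1/2}\}\in\ell_2$ with the summable kernel $\{2^{-k/2}\}_{k\ge0}$. By Young's inequality,
\[
\Bigl(\sum_s2^{-s}(S'_s)^2\Bigr)^{1/2}\le(1-2^{-1/2})^{-1}\Bigl(\sum_r\sigma_r\Bigr)^{1/2}\le(1-2^{-1/2})^{-1}\|f\|.
\]
Write the contradiction hypothesis multiplicatively: $c_{m+1}S_m\ge\delta t_{m+1}$ for $m\ge M$, which also covers indices with $t_{m+1}=0$. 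Summing squares over each block gives $\theta_s\le\delta^{-1}\,2^{-s/2}S'_s\,\sigma_s^{1/2}$ for $s\ge s_0$. Then Cauchy--Schwarz over $s$ gives
\[
\sum_{s\ge s_0}\theta_s\le\delta^{-1}\Bigl(\sum_s2^{-s}(S'_s)^2\Bigr)^{1/2}\Bigl(\sum_s\sigma_s\Bigr)^{1/2}\le\frac{\|f\|^2}{\delta\,(1-2^{-1/2})}<\infty,
\]
which contradicts (\ref{P9}). With this inserted, your proof is complete.

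The paper states Theorem \ref{PT3} without proof, quoting it. Its closest in-text argument, the proof of Theorem \ref{PT5a}, follows the same pattern: Hardy's inequality plus a blockwise Cauchy--Schwarz to verify the condition of Theorem \ref{DT1}.
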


 Theorem \ref{PT3} and Remark \ref{PR1} imply the following result.
 
 \begin{Theorem}\label{PT4} Assume that the weakness sequence $\tau$ satisfies (\ref{P9}). 
Then for any collection $\cL$ satisfying the condition $\cap_{L\in \cL} L = \{0\}$  we have for the WRPA($\cL,\tau$)
$$
\lim_{n\to \infty}\|x_n\| =0.
$$
\end{Theorem}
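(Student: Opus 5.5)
The plan is to deduce Theorem \ref{PT4} directly from Theorem \ref{PT3}, through the identification in Remark \ref{PR1}. This is the same way Theorem \ref{PT2} follows from Theorem \ref{PT1}.

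First, fix a collection $\cL$ with $\cap_{L\in \cL} L=\{0\}$. Form the dictionary $\cD(\cL)=\cup_{L\in\cL}\cD_L$ with $\cD_L=L^\perp\cap S(H)$. As noted before (\ref{P2a}), the condition $\cap_{L\in\cL}L=\{0\}$ makes the closed span of $\cD(\cL)$ equal to $H$: any $x$ orthogonal to every $L^\perp$ lies in every $L$, hence $x=0$. So $\cD(\cL)$ is a genuine dictionary and Theorem \ref{PT3} applies to it.

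Second, take any realization $x_0,x_1,\dots$ of the WRPA($\cL,\tau$) and produce a realization of the WGA($\cD(\cL),\tau$) for $f=x_0$ whose residuals satisfy $f^\tau_n=x_n$. Suppose inductively that $f^\tau_{n-1}=x_{n-1}$.
\begin{itemize}
\item If $\Pr_{L_n^\perp}(x_{n-1})\neq 0$, let $\varphi^\tau_n\in\cD_{L_n}$ be its normalization. Then (\ref{P3}) gives $x_n=x_{n-1}-\<x_{n-1},\varphi^\tau_n\>\varphi^\tau_n$, which is exactly (\ref{P5}).
\item If $\Pr_{L_n^\perp}(x_{n-1})=0$, then $x_n=x_{n-1}$. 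In that case take any unit $g\in L_n^\perp$; note $L_n\neq H$, since otherwise $\dist(x_{n-1},L_n)=0$ and the selection condition forces $t_n\sup_L\dist(x_{n-1},L)=0$.
\end{itemize}

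Third, check the greedy condition (\ref{P4}). For $g\in\cD_L$ we have $|\<x,g\>|\le\|\Pr_{L^\perp}x\|=\dist(x,L)$, with equality for the normalized projection. Hence
$$
\sup_{g\in\cD(\cL)}|\<x,g\>|=\sup_{L\in\cL}\dist(x,L),
$$
and (\ref{P2}) becomes $|\<x_{n-1},\varphi^\tau_n\>|=\dist(x_{n-1},L_n)\ge t_n\sup_{g}|\<x_{n-1},g\>|$. When the supremum is attained and $t_n=1$, the required maximizer exists. Theorem \ref{PT3} then gives $\|x_n\|=\|f-G^\tau_n(f,\cD(\cL))\|\to 0$ under (\ref{P9}).

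The only delicate point is the degenerate case above, where the projection does not move $x_{n-1}$. If $t_n>0$, the selection rule shows $\sup_L\dist(x_{n-1},L)=0$. Then $x_{n-1}$ lies in every $L$, so $x_{n-1}=0$, all later iterates are $0$, and the conclusion is trivial. If $t_n=0$, a step with zero coefficient is an admissible WGA step, since $0\ge 0$. So the realization is valid in every case.
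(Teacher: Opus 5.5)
Your proposal follows the paper's route. The paper gets Theorem \ref{PT4} in one line from Theorem \ref{PT3} and Remark \ref{PR1}: every run of the WRPA($\cL,\tau$) is read as a run of the WGA($\cD(\cL),\tau$) with $f=x_0$. Your main steps are correct: the density of $\sp\,\cD(\cL)$, the nondegenerate step via (\ref{P3}), the identity $\sup_{g\in\cD(\cL)}|\<x,g\>|=\sup_{L\in\cL}\dist(x,L)$, and the degenerate case with $t_n>0$.

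There is one slip in your degenerate case. Your reason for $L_n\neq H$ is not a contradiction, because $t_n\sup_L\dist(x_{n-1},L)=0$ holds automatically when $t_n=0$. So if $H\in\cL$, the WRPA may legally take $L_n=H$ while $x_{n-1}\neq 0$. Then $\cD_{L_n}=\emptyset$. Your zero-coefficient WGA step would need some element of $\cD(\cL)$ orthogonal to $x_{n-1}$, and none need exist. For example, take $H=\mathbb{R}^2$ and $\cL=\{\sp(e_1),\sp(e_2),H\}$, so $\cD(\cL)=\{\pm e_1,\pm e_2\}$. From $x_{n-1}=e_1+e_2$ every WGA step changes the residual, so this WRPA run is not literally a WGA realization. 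The paper's relation (\ref{P3}) tacitly assumes $L\neq H$ as well.

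The repair is easy. Work in $H\oplus\mathbb{R}$ with the dictionary $\cD(\cL)\cup\{e\}$, where $e$ is the new unit vector; this is still a dictionary. For $x\in H$ it leaves $\sup_g|\<x,g\>|$ unchanged, since $\<x,e\>=0$. Choosing $e$ at such a step (admissible because $t_n=0$) realizes $x_n=x_{n-1}$, and all residuals stay in $H$. Theorem \ref{PT3} then applies verbatim.
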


The following special case of weakness sequences is of interest. The reader can find some results on this special case in \cite{VT81}. Let 
$$
\cN :=\{n_k\}_{k=1}^\infty, \quad n_k\in \bbN, \quad n_k <n_{k+1}, \quad k=1,2,\dots. 
$$
For $t\in (0,1]$ denote 
$\tau(\cN,t) := \{t_n\}_{n=1}^\infty$ the weakness sequence with the property:
\be\label{P9a}
t_n=t \quad \text{for}\quad n\in \cN,\qquad t_n=0 \quad \text{for}\quad n\notin \cN.
\ee

The following results is obtained in \cite{VT81} (see Lemma 3.1 and Remark 3.1 there).

\begin{Theorem}[{\cite{VT81}}]\label{PT5} For a given subsequence $\cN$ assume
\be\label{P9b}
\sum_{k=1}^\infty  \frac{(n_{k+1}-n_k)^{1/2}}{n_k} = \infty, \qquad n_{k+1}/n_k \le C_0 
\ee
with a constant $C_0$.
Then for any dictionary $\cD$ and any $f\in H$ we have for the WGA($\cD,\tau(\cN,t)$)
$$
\lim_{m\to \infty}\|f-G_m^\tau(f,\cD)\| =0.
$$
\end{Theorem}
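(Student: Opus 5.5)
The plan is to follow the standard Hilbert-space convergence scheme for the WGA (as in the proofs of Theorems \ref{PT1} and \ref{PT3}). The point to exploit is that at steps $n\notin\cN$ the greedy condition (\ref{P4}) is void, since $t_n=0$. Those steps are therefore arbitrary one-dimensional projections. They still decrease the norm, but they give no information about $\sup_{g\in\cD}|\<f_{n-1},g\>|$, and only the steps $n\in\cN$ carry "greedy" information.

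Write $f_m:=f^\tau_m$ and $y_m:=|\<f_{m-1},\fijt\>|$ with $j=m$. From (\ref{P5}) we get $\|f_m\|^2=\|f_{m-1}\|^2-y_m^2$. Hence $\|f_m\|$ decreases to a limit, and $\sum_m y_m^2\le\|f\|^2$. For $n<m$ we have $f_n-f_m=\sum_{j=n+1}^m \<f_{j-1},\fijt\>\fijt$, which gives
$$
\bigl|\<f_n-f_m,f_m\>\bigr|\le \sum_{j=n+1}^m y_j\,|\<f_m,\fijt\>| \le \Bigl(\sum_{j=1}^m y_j\Bigr)\sup_{g\in\cD}|\<f_m,g\>| .
$$
If $m+1\in\cN$, the greedy step gives $\sup_g|\<f_m,g\>|\le y_{m+1}/t$. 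Combining this with the identity $\|f_n-f_m\|^2=\|f_n\|^2-\|f_m\|^2-2\<f_n-f_m,f_m\>$, the whole argument reduces to the following key lemma:
$$
\liminf_{k\to\infty}\; y_{n_k}\sum_{j<n_k} y_j =0 .
$$
Granting the lemma, choose indices $m_i=n_{k_i}-1$ along which this quantity tends to $0$. For $i<l$ take $n=m_i$ and $m=m_l$. The difference $\|f_{m_i}\|^2-\|f_{m_l}\|^2$ tends to $0$ because the norms converge. The inner product term satisfies $\bigl|\<f_{m_i}-f_{m_l},f_{m_l}\>\bigr|\le (y_{n_{k_l}}/t)\sum_{j<n_{k_l}} y_j\to0$. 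So $\{f_{m_i}\}$ is Cauchy and converges to some $h$. Moreover $\sup_g|\<f_{m_i},g\>|\le y_{n_{k_i}}/t\to0$, because $y_m\to0$ (the $y_m^2$ are summable). Hence $\<h,g\>=0$ for all $g\in\cD$, and $h=0$ since $\cD$ is complete. Finally, $\|f_m\|$ is monotone and tends to $0$ along a subsequence, so $\|f_m\|\to0$.

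The main obstacle is the key lemma, and this is where (\ref{P9b}) enters. I would argue by contradiction: suppose $y_{n_k}S_k\ge\delta>0$ for all large $k$, where $S_k:=\sum_{j<n_k}y_j$. Split $S_k$ into the blocks $[n_i,n_{i+1})$ and apply Cauchy--Schwarz in each block. This gives $S_{k+1}-S_k\le (n_{k+1}-n_k)^{1/2}\sigma_k$, where $\sigma_k^2:=\sum_{n_k\le j<n_{k+1}}y_j^2$ and $\sum_k\sigma_k^2<\infty$. Globally, Cauchy--Schwarz also gives $S_k\le \e_k n_k^{1/2}$ with $\e_k\to0$: the $\ell_2$ tail of $\{y_j\}$ beyond any fixed index is small, and the bounded ratio $n_{k+1}/n_k\le C_0$ lets us compare $n_k$ and $n_{k+1}$. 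These bounds yield $y_{n_k}\ge \delta\e_k^{-1}n_k^{-1/2}$. Pairing this lower bound with the weights $(n_{k+1}-n_k)^{1/2}/n_k$ through Cauchy--Schwarz, I would try to show that divergence of $\sum_k (n_{k+1}-n_k)^{1/2}/n_k$ forces $\sum_k y_{n_k}^2=\infty$ (or, equivalently, forces $S_k$ to grow faster than the bound permits). That contradicts $\sum y_m^2<\infty$. The delicate point is getting the exponents to match exactly, so that the divergent series in (\ref{P9b}) appears rather than the weaker $\sum 1/n_k$. I expect the bounded-ratio assumption to be used precisely to replace $n_{k+1}$ by $n_k$ in these comparisons.
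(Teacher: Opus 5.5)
Your reduction is correct. The Cauchy-sequence argument along $m_i=n_{k_i}-1$ is the sufficiency half of the criterion of \cite{VT82} (Theorem \ref{DT1}), which the paper simply invokes. So everything rests on your key lemma $\liminf_k y_{n_k}\sum_{j<n_k}y_j=0$, and that lemma is not proved. The route you sketch for it cannot close.

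The bound $S_k\le\varepsilon_k n_k^{1/2}$ is pointwise, and you have no control on how fast $\varepsilon_k\to0$, since it comes from the unknown $\ell_2$ tail of $\{y_j\}$. It only gives $y_{n_k}^2\ge\delta^2\varepsilon_k^{-2}/n_k$, which forces $\sum_k y_{n_k}^2=\infty$ only under a hypothesis like $\sum_k 1/n_k=\infty$. Condition (\ref{P9b}) is strictly weaker than that. Place about $2^s/s^2$ equally spaced elements of $\cN$ in each $[2^s,2^{s+1})$: then $n_{k+1}/n_k$ stays bounded, each block contributes about $1/s$ to the series in (\ref{P9b}), but only about $1/s^2$ to $\sum_k 1/n_k$.

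Pairing with the weights does not help either. Cauchy--Schwarz gives $\sum_k (n_{k+1}-n_k)^{1/2}/n_k\le\bigl(\sum_k (n_{k+1}-n_k)\varepsilon_k^2/n_k\bigr)^{1/2}\bigl(\sum_k S_k^{-2}\bigr)^{1/2}$, and the first factor need not be finite because $\sum_k (n_{k+1}-n_k)/n_k=\infty$. Your block bounds $S_{k+1}-S_k\le(n_{k+1}-n_k)^{1/2}\sigma_k$ would only help together with a weighted discrete Hardy inequality, and that is exactly the missing ingredient.

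What you need is an averaged bound on $S_k/n_k$, namely Hardy's inequality: for $a_j\ge0$ with $\{a_j\}\in\ell_2$, the means $b_n:=\frac1n\sum_{j\le n}a_j$ also form an $\ell_2$ sequence. Since $nb_n$ is nondecreasing and $n_{k+1}/n_k\le C_0$, you get $b_{n_k}\le C_0 b_m$ for $n_k\le m<n_{k+1}$. Hence $\sum_k (n_{k+1}-n_k)b_{n_k}^2\le C_0^2\sum_m b_m^2<\infty$, and then
\[
\sum_k \frac{(n_{k+1}-n_k)^{1/2}}{n_k}\,a_{n_k}\sum_{j\le n_k}a_j=\sum_k (n_{k+1}-n_k)^{1/2}a_{n_k}b_{n_k}\le\Bigl(\sum_k a_{n_k}^2\Bigr)^{1/2}\Bigl(\sum_k (n_{k+1}-n_k)b_{n_k}^2\Bigr)^{1/2}<\infty .
\]
The weights $(n_{k+1}-n_k)^{1/2}/n_k$ have divergent sum by (\ref{P9b}), so this yields $\liminf_k a_{n_k}\sum_{j\le n_k}a_j=0$ directly, with no contradiction argument. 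This is the paper's proof (of Theorem \ref{PT5a}, specialized to $t_{n_k}=t$).

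With $a_j=y_j$ this gives your key lemma. Equivalently, you can insert it into your contradiction setup: using $S_k\le n_k b_{n_k}$, it gives $\sum_k (n_{k+1}-n_k)S_k^2/n_k^2<\infty$, hence $\sum_k S_k^{-2}=\infty$, and so $\sum_k y_{n_k}^2\ge\delta^2\sum_k S_k^{-2}=\infty$, the contradiction you wanted.
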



We now prove a generalization of Theorem \ref{PT5}. The proof is very similar to the proof of Theorem \ref{PT5} from \cite{VT81}. However, for completeness we present it here. 

\begin{Theorem}\label{PT5a} Suppose that a subsequence $\cN$ and a weakness sequence $\tau$ satisfy the conditions
\be\label{P9c}
\sum_{k=1}^\infty  \frac{(n_{k+1}-n_k)^{1/2}t_{n_k}}{n_k} = \infty, \qquad n_{k+1}/n_k \le C_0, \quad k=1,2,\dots, 
\ee
with a constant $C_0$.
Then for any dictionary $\cD$ and any $f\in H$ we have for the WGA($\cD,\tau$)
$$
\lim_{m\to \infty}\|f-G_m^\tau(f,\cD)\| =0.
$$
\end{Theorem}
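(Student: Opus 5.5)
We follow the classical convergence argument for the WGA from \cite{VT75,VT81}, keeping track of the weakness parameters only at the times $n_k$. Write $f_m:=f^\tau_m$ and $y_j:=|\<f_{j-1},\fijt\>|$. Then $\|f_m\|^2=\|f\|^2-\sum_{j\le m}y_j^2$, so $\{\|f_m\|\}$ is nonincreasing with a limit $\beta\ge0$, and $\sum_j y_j^2\le\|f\|^2<\infty$. Assume, for contradiction, that $\beta>0$.

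\textbf{Step 1 (reduction to weak convergence).} Every residual differs from $f$ by a finite combination of dictionary elements. By the standard argument (\cite{VTbook}, proof of Theorem 2.4), it suffices to show that
$$
\liminf_{m\to\infty}\ \max_{j\le m} y_j\cdot\sum_{i\le m} y_i \cdot \frac{1}{m}\cdot(\text{suitable normalization})
$$
is small, or, more concretely, the following. Fix $\e>0$ and an element $h=\sum_{i\le N} c_i g_i$ with $\|f-h\|\le \e$. For each $m$,
$$
\|f_m\|^2=\<f_m,f-h\>+\<f_m,h-G_m\>\le \|f_m\|\e+\Big(\sum|c_i|+\sum_{j\le m}y_j\Big)\sup_{g\in\cD}|\<f_m,g\>|.
$$
Using the greedy step at time $m+1$ with $t_{m+1}>0$, this gives
$$
\|f_m\|^2\le \e\|f\|+\Big(C(h)+\sum_{j\le m}y_j\Big)\frac{y_{m+1}}{t_{m+1}} .
$$
So it suffices to find $m$ in a set where $t_{m+1}$ is controlled, for which $t_{m+1}^{-1}y_{m+1}\sum_{j\le m}y_j$ is arbitrarily small.

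\textbf{Step 2 (choice of times).} We use $m+1=n_k$ (shifting indices if needed). Put $A_k:=\sum_{j<n_k}y_j$. By Cauchy--Schwarz, $A_k\le n_k^{1/2}\big(\sum_j y_j^2\big)^{1/2}\le n_k^{1/2}\|f\|$. Together with Step 1, this means we need
$$
\liminf_k \frac{y_{n_k}\,n_k^{1/2}}{t_{n_k}}=0 .
$$
Suppose instead that $y_{n_k}\ge \delta\, t_{n_k} n_k^{-1/2}$ for all large $k$. Then
$$
\sum_k y_{n_k}^2(n_{k+1}-n_k)^{0}\ \ge\ \delta^2\sum_k t_{n_k}^2/n_k ,
$$
which is not yet the hypothesis (\ref{P9c}). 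To get the $(n_{k+1}-n_k)^{1/2}$ weight we sharpen Step 1. In the block $(n_k,n_{k+1}]$ the quantity $\sum_{j\le m}y_j$ grows by at most $(n_{k+1}-n_k)^{1/2}(\sum_{\text{block}}y_j^2)^{1/2}$. This is the main obstacle: we must pair the smallness of $y_{n_k}$ with the block sums of $y_j^2$.

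\textbf{Step 3 (handling the obstacle).} We bound $A_k$ dyadically in blocks, $A_k\le \sum_{l<k}(n_{l+1}-n_l)^{1/2}\sigma_l$ with $\sigma_l^2:=\sum_{n_l\le j<n_{l+1}}y_j^2$ and $\sum_l\sigma_l^2<\infty$. We argue by contradiction: assume $y_{n_k}A_k\ge \delta t_{n_k}$ for all large $k$. Then
$$
\sum_k \frac{(n_{k+1}-n_k)^{1/2}t_{n_k}}{n_k}\le \delta^{-1}\sum_k \frac{(n_{k+1}-n_k)^{1/2}}{n_k}\,y_{n_k}A_k .
$$
We then show that the right-hand side is finite, contradicting (\ref{P9c}). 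The plan is to estimate $A_k\le n_k^{1/2}\big(\sum_{j<n_k} y_j^2\big)^{1/2}$ and $y_{n_k}\le\sigma_k$, and then apply Cauchy--Schwarz in $k$:
$$
\sum_k \frac{(n_{k+1}-n_k)^{1/2}}{n_k^{1/2}}\sigma_k\|f\| \le \|f\|\Big(\sum_k\sigma_k^2\Big)^{1/2}\Big(\sum_k\frac{n_{k+1}-n_k}{n_k}\Big)^{1/2}.
$$
The last factor diverges in general, so this crude version fails. The refinement, as in \cite{VT81}, is to replace the global bound on $A_k$ by a Hardy-type inequality, $\sum_k n_k^{-1}(n_{k+1}-n_k)^{1/2}\sigma_k A_k\lesssim C(C_0)\sum_l \sigma_l^2$. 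This uses $n_{k+1}/n_k\le C_0$ to compare $n_{k+1}-n_k$ with $n_k$ and to sum the geometric-type tails. That Hardy-type estimate is the step we expect to require the most care. It yields the contradiction, hence $\beta=0$.
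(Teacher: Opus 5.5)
Your skeleton is the paper's argument with the sufficiency half of the criterion in Theorem \ref{DT1} written out by hand. The Step~1 inequality $\|f_m\|^2\le\varepsilon\|f\|+(C(h)+\sum_{j\le m}y_j)\,y_{m+1}/t_{m+1}$ is a valid substitute for citing that criterion. Step~3 then lands on exactly the estimate the paper proves: $\sum_k (n_{k+1}-n_k)^{1/2}n_k^{-1}\,y_{n_k}A_k<\infty$ for an arbitrary nonnegative $\ell_2$ sequence.

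But you stop precisely there. The ``Hardy-type estimate'' you display is the entire content of the proof, and you only assert it. Your hint about summing ``geometric-type tails'' is also not how the ratio condition enters; for $n_k=k$ there are no geometric tails at all. The missing idea is to replace the pointwise bound $A_k\le n_k^{1/2}\|y\|_2$, which you correctly saw is too lossy, by square-summability of the averages $b_n:=\frac1n\sum_{j\le n}y_j$. The classical Hardy inequality gives $\sum_n b_n^2\le 4\sum_j y_j^2$. The condition $n_{k+1}/n_k\le C_0$ is used only to transfer this to the subsequence. For $n_k\le m<n_{k+1}$ one has $mb_m\ge n_kb_{n_k}$ and $m<n_{k+1}\le C_0n_k$, so $b_{n_k}\le C_0b_m$ and hence $(n_{k+1}-n_k)b_{n_k}^2\le C_0^2\sum_{n_k\le m<n_{k+1}}b_m^2$. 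Since $A_k\le n_kb_{n_k}$, Cauchy--Schwarz with $y_{n_k}$ itself (the block sums $\sigma_k$ are unnecessary) gives
\[
\sum_k \frac{(n_{k+1}-n_k)^{1/2}}{n_k}\,y_{n_k}A_k\le\Big(\sum_k y_{n_k}^2\Big)^{1/2}\Big(\sum_k (n_{k+1}-n_k)b_{n_k}^2\Big)^{1/2}\le 2C_0\sum_j y_j^2 .
\]
Your stated right-hand side $\sum_l\sigma_l^2$ is slightly off, since $A_k$ also contains the indices $j<n_1$.

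There is also a small loose end between Steps 1 and 3. Once $\beta>0$ and $\varepsilon\|f\|\le\beta^2/2$, what Step~1 gives at $m=n_k-1$ is $(C(h)+A_k)\,y_{n_k}\ge \tfrac{\beta^2}{2}\, t_{n_k}$. Step~3, however, refutes only $A_k y_{n_k}\ge\delta t_{n_k}$, and $C(h)$ depends on $\varepsilon$. This is easily absorbed. $A_k$ is nondecreasing and becomes positive at some $k_0$: if all $y_j=0$, then $f_m=f$ for all $m$, and any greedy step with $t_{n_k}>0$ forces $\sup_{g\in\mathcal{D}}|\langle f,g\rangle|=0$, i.e.\ $f=0$. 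Hence $C(h)+A_k\le(1+C(h)/A_{k_0})A_k$ for $k\ge k_0$, so $\delta$ can be chosen after $h$; indices with $t_{n_k}=0$ contribute nothing to the series. Also delete the meaningless first ``it suffices'' display (with the ``suitable normalization'') in Step~1.

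With these repairs your proof is complete and self-contained. The paper's version is shorter because it isolates the purely sequence-space implication $(\ref{I})\Rightarrow(\ref{II})$ and cites Theorem \ref{DT1} for the rest.
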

\begin{proof} We will prove that
\be\label{I}
\sum_{k=1}^\infty \frac{(n_{k+1}-n_k)^{1/2}t_{n_k}}{n_k} = \infty
\ee
implies the following property
\be\label{II}
\forall \{a_j\} \in \ell_2,\quad a_j\ge 0, j=1,2,\dots,\quad\liminf_{k\to \infty}\frac{a_{n_k}}{t_{n_k}}\sum_{j=1}^{n_k}a_j =0.
\ee
Then we apply the criterion for convergence of the WGA($\cD,\tau$) from \cite{VT82} (see Theorem \ref{DT1} below) and complete the proof. So, we return back to the proof of (\ref{I}) $\Rightarrow$ (\ref{II}). 
We prove the following a little stronger statement than (\ref{II}): $\forall \{a_j\} \in \ell_2$, $ a_j\ge 0$, $j=1,2,\dots$,
\be\label{3.2.1}
\sum_{k=1}^\infty\frac{(n_{k+1}-n_k)^{1/2}t_{n_k}}{n_k}\frac{a_{n_k}}{t_{n_k}}\sum_{j=1}^{n_k}a_j <\infty.
\ee
It is known (see \cite{Z}, Ch.1, S.9) that $\{a_j\} \in \ell_2$ implies that
\be\label{3.3.1}
\{b_n\}_{n=1}^\infty \in \ell_2 \quad \text{with}\quad b_n := \frac{1}{n}\sum_{j=1}^na_j.
\ee
We observe that
\be\label{3.4}
\sum_{k=1}^\infty (n_{k+1}-n_k)b_{n_k}^2 <\infty.
\ee
Indeed, for any $m>n$ we have
$$
mb_m\ge nb_n
$$
 and for $n_k<m<n_{k+1}$ we have  
$$
b_{n_k} \le \frac{n_{k+1}}{n_k} b_m \le C_0b_m
$$
with a constant $C_0$ independent of $k$ and $m$. Therefore,
\be\label{3.5}
(n_{k+1}-n_k)b_{n_k}^2 \le C_0^2 \sum_{m=n_k}^{n_{k+1}-1}b_m^2 .
\ee
Combining (\ref{3.3.1}) and (\ref{3.5}) we obtain (\ref{3.4}).

We return to (\ref{3.2.1})
$$
\sum_{k=1}^\infty\frac{(n_{k+1}-n_k)^{1/2}t_{n_k}}{n_k}\frac{a_{n_k}}{t_{n_k}}\sum_{j=1}^{n_k}a_j =
\sum_{k=1}^\infty(n_{k+1}-n_k)^{1/2} a_{n_k}b_{n_k} \le
$$
$$
\left(\sum_{k=1}^\infty a_{n_k}^2\right)^{1/2}\left(\sum_{k=1}^\infty(n_{k+1}-n_k)b_{n_k}^2\right)^{1/2} <\infty.
$$

\end{proof}

\begin{Remark}\label{PR2} In the case of subsequence $n_k=k$, $k=1,2,\dots$, Theorem \ref{PT5a} gives Theorem \ref{PT1}
and in the case $\tau = \tau(\cN,t)$ it gives Theorem \ref{PT5}.
\end{Remark}

Theorem \ref{PT5a} and Remark \ref{PR1} imply the following result.
 
 \begin{Theorem}\label{PT6} Assume that a subsequence $\cN$   and a weakness sequence $\tau$ satisfy the conditions (\ref{P9c}). 
Then for any collection $\cL$ satisfying the condition $\cap_{L\in \cL} L = \{0\}$  we have for the WRPA($\cL,\tau$)
$$
\lim_{n\to \infty}\|x_n\| =0.
$$
\end{Theorem}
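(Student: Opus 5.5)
This is Theorem \ref{PT6}. The plan is to reduce it to Theorem \ref{PT5a} through the dictionary correspondence in Remark \ref{PR1}, in the same way Theorems \ref{PT2} and \ref{PT4} were obtained from Theorems \ref{PT1} and \ref{PT3}.

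\emph{Step 1: the dictionary is admissible.} Fix a collection $\cL$ with $\cap_{L\in \cL} L=\{0\}$ and form $\cD(\cL)=\cup_{L\in\cL}\cD_L$, where $\cD_L=L^\perp\cap S(H)$. Every element of $\cD(\cL)$ has norm one. We need the closure of $\sp\,\cD(\cL)$ to be $H$. If $x$ is orthogonal to every $L^\perp$, then $x\in L^{\perp\perp}=L$ for every $L$, so $x=0$. Hence $\cD(\cL)$ is a dictionary, and Theorem \ref{PT5a} applies to it.

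\emph{Step 2: the projection iteration is a WGA realization.} Take a run $x_0,x_1,\dots$ of the WRPA($\cL,\tau$) with subspaces $L_n$. For each $n$ pick $g_n\in\cD_{L_n}$ with $\Pr_{L_n^\perp}(x_{n-1})=\<x_{n-1},g_n\>g_n$. If $\Pr_{L_n^\perp}(x_{n-1})=0$, take any unit vector of $L_n^\perp$; when $L_n\ne H$ this is possible, and if $L_n=H$ then $x_n=x_{n-1}$ with distance $0$, which is handled by the same bookkeeping with $\<x_{n-1},g\>=0$ for any $g$. Then by (\ref{P3}), $x_n=x_{n-1}-\<x_{n-1},g_n\>g_n$.

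The greedy inequality must also hold. By (\ref{P7}) we have $|\<x_{n-1},g_n\>|=\dist(x_{n-1},L_n)$. Moreover
$$\sup_{g\in\cD(\cL)}|\<x_{n-1},g\>|=\sup_{L\in\cL}\sup_{g\in\cD_L}|\<x_{n-1},g\>|=\sup_{L\in\cL}\|\Pr_{L^\perp}x_{n-1}\|=\sup_{L\in\cL}\dist(x_{n-1},L).$$
So (\ref{P2}) becomes exactly (\ref{P4}) with $\fimt=g_n$. Consequently $x_n=f^\tau_n$ for some realization of the WGA($\cD(\cL),\tau$) applied to $f=x_0$.

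\emph{Step 3: conclude.} Since $\cN$ and $\tau$ satisfy (\ref{P9c}), Theorem \ref{PT5a} gives $\|f-G^\tau_n(f,\cD(\cL))\|=\|x_n\|\to0$. This holds for every realization, so it holds in particular for the one built in Step 2.

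The only potentially delicate point is the identification of suprema in Step 2, together with the degenerate case where the chosen projection vanishes. Both are routine. I expect no real obstacle, since the substance is already contained in Theorem \ref{PT5a}.
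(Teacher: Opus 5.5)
Your Steps 1--3 follow the paper's own argument. The paper obtains Theorem \ref{PT6} in one line from Theorem \ref{PT5a} and Remark \ref{PR1}, and you have spelled out that remark: density of $\sp\,\cD(\cL)$, the choice of $g_n$, and the identification of the two suprema. All of that is correct for $L_n\neq H$.

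The one claim that fails is your treatment of the degenerate case $L_n=H$. This case is not excluded. $H$ is a closed subspace, so $\cL$ may contain it. Also (\ref{P9c}) allows $t_n=0$ (for instance $\tau(\cN,t)$ with $n\notin\cN$), and then any $L_n\in\cL$ is admissible. In that case $x_n=x_{n-1}$ while $\cD_{L_n}=\emptyset$.

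Your claim that $\<x_{n-1},g\>=0$ for any $g$ is false unless $x_{n-1}=0$. In fact there may be no $g\in\cD(\cL)$ orthogonal to $x_{n-1}$ at all. Take $H=\mathbb{R}^2$ with standard basis $e_1,e_2$, $\cL=\{\sp(e_1),\sp(e_2),H\}$, and $x_{n-1}=e_1+e_2$. Then $\cD(\cL)=\{\pm e_1,\pm e_2\}$, and every $g\in\cD(\cL)$ has $|\<x_{n-1},g\>|=1$. Such a null step is not a step of any realization of the WGA($\cD(\cL),\tau$), so Theorem \ref{PT5a} cannot be invoked as a black box. Remark \ref{PR1} in the paper has the same silent defect, since it asserts the existence of $g\in\cD_L$ for every $L\in\cL$.

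The repair is cheap. Pass to $H':=H\oplus\mathbb{R}$ with the dictionary $\cD':=\cD(\cL)\cup\{u\}$, where $u:=(0,1)$; this is still a dictionary in $H'$.
\begin{itemize}
\item For every $x\in H$ you have $\<x,u\>=0$, hence $\sup_{g\in\cD'}|\<x,g\>|=\sup_{g\in\cD(\cL)}|\<x,g\>|$.
\item A null step with $L_n=H$ forces $t_n\sup_{L\in\cL}\dist(x_{n-1},L)=0$, so it is a legitimate WGA step with $\ff_n=u$ and coefficient $0$.
\item All other steps are realized exactly as in your Step 2.
\end{itemize}
Theorem \ref{PT5a}, applied in $H'$ to $(x_0,0)$, then gives $\|x_n\|\to 0$.
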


Here is a corollary of Theorem \ref{InT1}.

 \begin{Theorem}\label{PT7} Assume $\tau :=\{t_k\}_{k=1}^\infty$ is a nonincreasing weakness sequence. For any Hilbert space $H$ for any collection $\cL$ satisfying the condition $\cap_{L\in \cL} L = \{0\}$ for each $f\in H$ we have for the WRPA($\cL,\tau$)   
\be\label{P26}
\|f_m\| \le \left(1+\sum^m_{k=1} t^2_k\right)^{-\al/2} \|f\|^{1-\al} \|f\|_{A_1(\cD(\cL))}^\al
\ee
provided $\al \le \frac{t_m}{t_m+2}$.
\end{Theorem}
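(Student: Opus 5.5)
The plan is to reduce Theorem \ref{PT7} directly to Corollary \ref{PC1} (equivalently Theorem \ref{InT1}) by way of Remark \ref{PR1}. Here $f_m$ denotes the $m$th iterate $x_m$ of the WRPA($\cL,\tau$) started at $x_0=f$.

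First I check that $\cD(\cL)$ is a legitimate dictionary. Each element of $\cD_L=L^\perp\cap S(H)$ has norm one. The hypothesis $\cap_{L\in\cL}L=\{0\}$ gives $\csp\,\cD(\cL)=H$: if $h\perp \cD(\cL)$, then $h\perp L^\perp$ for every $L$, so $h\in L$ for every $L$, and hence $h=0$.

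Next I show by induction on $n$ that any run of the WRPA($\cL,\tau$) is a realization of the WGA($\cD(\cL),\tau$) with the same residuals, i.e. $x_n=f_n^\tau$. Suppose $x_{n-1}=f^\tau_{n-1}$. By (\ref{P3}) there is $g_n\in\cD_{L_n}$ with
\[
x_n=\Pr_{L_n}(x_{n-1})=x_{n-1}-\<x_{n-1},g_n\>g_n,
\]
and by (\ref{P7}) we have $\dist(x_{n-1},L_n)=|\<x_{n-1},g_n\>|$. It remains to verify the greedy condition (\ref{P4}) for $g_n$.

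This verification is the only point needing care. For each $L$ we need
\[
\sup_{g\in\cD_L}|\<x,g\>|=\|\Pr_{L^\perp}x\|=\dist(x,L).
\]
This holds by Cauchy--Schwarz, since $\<x,g\>=\<\Pr_{L^\perp}x,g\>$ for $g\in L^\perp$, and the supremum is attained at $g=\Pr_{L^\perp}x/\|\Pr_{L^\perp}x\|$ when $\Pr_{L^\perp}x\neq 0$. Taking the supremum over $L$ gives
\[
\sup_{g\in\cD(\cL)}|\<x_{n-1},g\>|=\sup_{L\in\cL}\dist(x_{n-1},L).
\]
Therefore (\ref{P2}) translates exactly into (\ref{P4}) with $\fimt=g_n$. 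In the case $t_n=1$, an assumed maximizing subspace yields a maximizing dictionary element.

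Finally I apply Corollary \ref{PC1}, which is valid for every realization and every $f\in H$ (as in (\ref{P24}) with $b=1$), with $\cD=\cD(\cL)$ and nonincreasing $\tau$. This gives
\[
\|f_m\|=\|f-G^{\tau}_m(f,\cD(\cL))\|\le \left(1+\sum_{k=1}^m t_k^2\right)^{-\al/2}\|f\|^{1-\al}\|f\|_{A_1(\cD(\cL))}^\al
\]
for $\al\le t_m/(t_m+2)$. If $\|f\|_{A_1(\cD(\cL))}=\infty$, the bound is trivial. No serious obstacle is expected; the main point is the identification of the two suprema above.
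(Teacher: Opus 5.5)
Your proposal is correct and follows the paper's own route: the paper states Theorem \ref{PT7} as a corollary of Theorem \ref{InT1}, that is, Corollary \ref{PC1}, which holds for every $f$ and every realization as in (\ref{P24}). It combines this with Remark \ref{PR1}, which identifies each run of the WRPA($\cL,\tau$) with a realization of the WGA($\cD(\cL),\tau$). Your Cauchy--Schwarz verification that $\sup_{g\in\cD(\cL)}|\<x,g\>|=\sup_{L\in\cL}\dist(x,L)$ fills in the detail the paper leaves implicit when it calls (\ref{P7}) ``obvious''.
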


\subsection{ Projections algorithms versus greedy algorithms}
\label{PvG}

We have illustrated above the well known phenomenon (see \cite{BK} and references therein) that consecutive projections algorithms are related to greedy algorithms. We now discuss this important phenomenon in detail. 
Let as above $\cL$ be a collection of closed subspaces of a Hilbert space $H$. Associate with $\cL$ the collection $\cL^\perp:= \{L^\perp: L\in\cL\}$ and the dictionary $\cD(\cL)= (\cup_{L\in\cL} L^\perp)\cap S(H)$. We begin with the greedy step (\ref{P2}). For convenience, we use standard notations of the greedy approximation theory.

 $\cL$-greedy step: Find $L_m \in \cL$ such that
 \be\label{P10}
   \dist(f_{m-1},L_m) \ge t_m\sup_{L\in \cL}  \dist(f_{m-1},L).
 \ee
 By the following obvious relations 
  \be\label{P11}
 \Pr_L(x) = x-\Pr_{L^\perp}(x),\qquad \dist(x,L)= \|\Pr_{L^\perp}(x)\|
 \ee
 we see that the greedy step (\ref{P10}) with respect to the collection $\cL$ is equivalent to the following greedy step with respect to the dictionary $\cD(\cL)$.
 
  $\cD(\cL)$-greedy step: Find $L_m^\perp \in \cL^\perp$ such that
 \be\label{P12}
    \|\Pr_{L_m^\perp}(f_{m-1})\| \ge t_m\sup_{L\in \cL}  \|\Pr_{L^\perp}(f_{m-1})\|.
 \ee
 Note that the above greedy step provides the following step, which is the greedy step with respect to $\cD(\cL)$: Find $\ff_m \in \cD(\cL)$ such that 
 \be\label{P13}
 |\<f_{m-1},\ff_m\>| \ge t_m \sup_{g\in \cD(\cL)} |\<f_{m-1},g\>|.
 \ee
 This means that the algorithm with the step (\ref{P12}) is a realization of the weak greedy algorithm with respect to $\cD(\cL)$. 
 For brevity, denote 
 $$
 P_m(f):=\<f_{m-1},\ff_m\>\ff_m .
 $$ 

There is a number of greedy algorithms, which have the greedy step (\ref{P13}) (in other words, expressed in terms of projections, (\ref{P12})). These algorithms differ in the approximation steps and have the same greedy step (with respect to a given dictionary $\cD$)
  \be\label{P13a}
 |\<f_{m-1},\ff_m\>| \ge t_m \sup_{g\in \cD} |\<f_{m-1},g\>|.
 \ee
We now recall some of them. The WGA($\cD,\tau$)) is defined above. 

The Weak Orthogonal Greedy Algorithm WOGA($\cD,\tau$)) with respect to a dictionary $\cD$ has the greedy step (\ref{P13a}) for $m=1,2,\dots$ ($f_0:=f$). Its approximation step is the following: ($G_m(f)$ is an approximant, $G_0(f)=0$, $f_m$ is a residual, $f_0:=f$) 
 \be\label{P14}
   G_m(f) := \Pr_{H_m}(f),\quad H_m := \sp(\ff_1,\dots,\ff_m),\quad f_m := f- G_m(f).
 \ee
 
 The Weak Greedy Algorithm with Free Relaxation has the greedy step (\ref{P13a})  for $m=1,2,\dots$ ($f_0:=f$). Its approximation step is the following: ($G_m(f)$ is an approximant $G_0(f)=0$, $f_m$ is a residual, $f_0:=f$) 
 \be\label{P15}
   G_m(f) := (1-w^*)G_{m-1}(f) +\la^* \ff_m,\quad f_m := f- G_m(f),
 \ee
 where
  \be\label{P16}
  \|f-((1-w^*)G_{m-1}(f) +\la^* \ff_m)\| = \min_{w,\la} \|f-((1-w)G_{m-1}(f) +\la \ff_m)\| .
  \ee
 In other words, 
 $$
 G_m(f) := \Pr_{Y_m}(f),\qquad Y_m := \sp(G_{m-1}(f),\ff_m).
 $$
 
 There are greedy algorithms, which are designed for approximation of elements from the special class $\conv(\cD)$ (closure of convex hull of $\cD$). We present two examples. 
 
 Relaxed Greedy Algorithm (RGA) similarly to the above algorithms has the greedy step (\ref{P13a}) but for a special weakness sequence $\tau =\{1\}$ (see \cite{VTbook}, p.82).  Its approximation step is the following: 
 $$
 G_0(f) :=0,\qquad G_1(f) := P_1(f),
 $$
 and for $m\ge 2$
  \be\label{P17}
   G_m(f) := (1-1/m)G_{m-1}(f) +(1/m)\ff_m,\quad f_m :=f-G_m(f).
  \ee
  
  We proceed to a thresholding-type algorithm (see \cite{VT94}).  Keeping in mind possible applications of this algorithm we  consider instead of $A_1(\cD)$  the closure of the convex hull of $\cD$, which we denote conv($\cD$) or  $A_1(\cD^+)$. Clearly, $A_1(\cD) = \conv(\cD^\pm)$, where $\cD^\pm := \{\pm g\,:\, g\in \cD\}$ is the symmetrized version of dictionary $\cD$. 
 Let $\e=\{\e_n\}_{n=1}^\infty $, $\e_n> 0$, $n=1,2,\dots$ . 

 {\bf Incremental Algorithm with schedule $\e$ (IA($\e$)).} 
Let $f\in \conv(\cD)$. Denote $f_0^{i,\e}:= f$ and $G_0^{i,\e} :=0$. Then, for each $m\ge 1$ we have the following inductive definition.

(1) $\ff_m^{i,\e} \in \cD$ is any element satisfying
$$
\<f_{m-1}^{i,\e},(\ff_m^{i,\e}-f)\> \ge -\e_m \|f_{m-1}^{i,\e}\|.
$$

(2) Define
$$
G_m^{i,\e}:= (1-1/m)G_{m-1}^{i,\e} +\ff_m^{i,\e}/m.
$$

(3) Let
$$
f_m^{i,\e} := f- G_m^{i,\e}.
$$ 

Let us demonstrate how the IA($\e$) algorithm works in the case of remote  projections. We call the corresponding algorithm the Remote Projections with schedule $\e$ (RP($\e$)). 
We start with $x_0$ and a collection $\cL = \{L\}$ of subspaces. Denote $f:= x_0$ and as above assume that $\cap_{L\in \cL} L = \{0\}$. With each $L\in\cL$ we associate the system 
 $\cD_L := L^\perp\cap S(H)$ and define the dictionary $\cD(\cL) := \cup_{L\in \cL}\cD_L$. Assume that $f \in \conv(\cD(\cL))$.
 At the first step the IA($\e$) looks for an element $\ff_1\in \cD(\cL)$ satisfying
 \be\label{PI1}
 \<f,\ff_1\> \ge \|f\|^2 - \e_1\|f\|.
 \ee
 The theory of greedy approximation guarantees existence of such element $\ff_1$. This step is equivalent to finding a subspace $L_1\in \cL$ such that 
 \be\label{PI2}
 \dist(x_0,L_1) \ge \|x_0\|^2 -\e_1\|x_0\|,\qquad x_1:=x_0 -\ff_1,
 \ee
 where $\ff_1\in \cD_{L_1}$ is such that $\Pr_{L_1}(x_0) = x_0 - \<x_0,\ff_1\>\ff_1$. 
 Then we iterate these steps in such a way that the $m$th step of the RP($\e$) is the following. Find a subspace $L_m\in \cL$ such that 
 $$
  \dist(x_{m-1},L_m) \ge \|x_{m-1}\|^2 -\e_m\|x_{m-1}\|,
 $$
 \be\label{PI3}
  x_m := x_{m-1} -(\ff_1+\dots+\ff_m)/m, 
 \ee
 where $\ff_m\in \cD_{L_m}$ is such that $\Pr_{L_m}(x_{m-1}) = x_{m-1} - \<x_{m-1},\ff_m\>\ff_m$. 

\section{Discussion}
\label{D}

We begin with some general comments on convergence of greedy algorithms. The reader can find a detailed discussion 
in the recent survey \cite{VT211}. For brevity we write GA for greedy algorithm.

{\bf Generic GA.} GA is an iterative process. Each iteration of it consists of two steps -- 
a greedy step and an approximation step. GA works in a Banach space $X$ with respect to a given 
system (dictionary) $\cD$ of elements. For a given element $f\in X$ it builds a sequence of elements $\ff_1,\ff_2,\dots$ of 
the system $\cD$, a sequence of approximations $G_1,G_2,\dots$, and a sequence of residuals $f_k:= f-G_k$, $k=1,2,\dots$. For convenience we set $G_0=0$ and $f_0=f$. Suppose we have performed $m-1$ iterations of GA. 
Then, at the $m$th iteration we apply the greedy step of our specific GA. The greedy step provides a new element 
$\ff_m\in \cD$ to be used for approximation. After that, using $\ff_m$ and the information from the previous $m-1$ iterations, we apply the approximation step of our specific GA and obtain the approximation $G_m$. 
Specifying the greedy steps and the approximation steps, we obtain different greedy algorithms. 
The reader can find the most basic examples of greedy and approximation steps in \cite{VT211}. 

{\bf Convergence.} We say that an algorithm GA converges in a Banach space $X$ if for any dictionary $\cD$ and each $f\in X$ we have for any realization of the algorithm GA the following property 
$$
\lim_{m\to \infty} \|f_m\|_X =0.
$$

In the above definition of convergence we ask for convergence of an algorithm GA for all elements $f$ of $X$. In this paper 
(see Section \ref{T}) we discuss convergence of a greedy algorithm not for all elements $f$ of $X$ but only for 
$f\in A_1(\cD)$. It seems to be an interesting and important problem. One would expect this kind of convergence under weaker conditions, for instance conditions on the weakness sequence, than the corresponding conditions for the above defined convergence. We formulate the corresponding definition.

{\bf $A_1(\cD)$-convergence.} We say that an algorithm GA $A_1(\cD)$-converges in a Banach space $X$ if for a given dictionary $\cD$ and each $f\in A_1(\cD)$ we have for any realization of the algorithm GA the following property 
$$
\lim_{m\to \infty} \|f_m\|_X =0.
$$

It is well known that the problem of finding a criterion for convergence of a specific greedy algorithm (for instance, in terms of 
the weakness sequence) is a difficult problem and it is only solved in a few cases. The reader can find related open problems, for instance, in \cite{VT211}. We now formulate the criterion for convergence of the WGA($\tau$) from \cite{VT82}, which covers all weakness sequences~$\tau$.

\begin{Theorem}[{\cite{VT82}}]\label{DT1} The following condition on the weakness sequences $\tau$ 
\be\label{D1}
\forall \{a_j\} \in \ell_2,\quad a_j\ge 0, j=1,2,\dots,\quad\liminf_{n\to \infty}\frac{a_{n}}{t_{n}}\sum_{j=1}^{n}a_j =0
\ee
is the necessary and sufficient condition for convergence of the WGA($\tau$).
\end{Theorem}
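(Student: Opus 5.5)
The proof splits into sufficiency, which is a Cauchy-type argument along a subsequence, and necessity, which needs an explicit counterexample. Throughout we use the convention $a/0=+\infty$ for $a>0$ and $0/0=0$. With $y_j:=|\<f^\tau_{j-1},\fijt\>|$ we have $\sum_j y_j^2\le\|f\|^2$ from $\|\fmt\|^2=\|\ft\|^2-y_m^2$, so $\{y_j\}\in\ell_2$. The greedy step (\ref{P4}) gives $\sup_{g\in\cD}|\<\ft,g\>|\le y_m/t_m$.

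\textbf{Sufficiency.} Assume (\ref{D1}) and apply it with $a_j:=y_j$. This gives a subsequence $m_k\to\infty$ along which
$$\frac{y_{m_k+1}}{t_{m_k+1}}\sum_{j=1}^{m_k+1}y_j\to 0 .$$
The key identity, for $n<m$, is
$$\|f^\tau_n-\fmt\|^2=\|f^\tau_n\|^2-\|\fmt\|^2-2\<\fmt,f^\tau_n-\fmt\>.$$
Since $f^\tau_n-\fmt=\sum_{j=n+1}^m\<f^\tau_{j-1},\fijt\>\fijt$, we get
$$|\<\fmt,f^\tau_n-\fmt\>|\le \sup_{g\in\cD}|\<\fmt,g\>|\sum_{j=n+1}^m y_j\le \frac{y_{m+1}}{t_{m+1}}\sum_{j=1}^{m+1}y_j .$$
Now take $n=m_k<m=m_l$. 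The norms $\|\fmt\|$ are nonincreasing and hence convergent, and the cross term tends to $0$ along the subsequence. Therefore $\{f^\tau_{m_k}\}$ is Cauchy and converges to some $u\in H$.

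If $y_j=0$ for all $j$, then $\sup_g|\<f,g\>|=0$ and so $f=0$. Otherwise $\sum_{j\le m_k+1}y_j$ is bounded below by a positive constant for large $k$. Hence $\sup_g|\<f^\tau_{m_k},g\>|\le y_{m_k+1}/t_{m_k+1}\to0$, so $\<u,g\>=0$ for every $g\in\cD$. Because $\cD$ is complete, $u=0$. Monotonicity of $\|\fmt\|$ then gives $\|\fmt\|\to0$ along the full sequence.

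\textbf{Necessity.} Suppose (\ref{D1}) fails. Then there are $a_j\ge0$ with $\{a_j\}\in\ell_2$ and $c>0$ such that $a_n\sum_{j\le n}a_j\ge c\,t_n$ for all $n\ge n_0$. Shifting indices only costs a finite prefix of the algorithm, so we may take $n_0=1$. The plan is to build a dictionary $\cD$ and an $f$ in $\ell_2$ with orthonormal basis $\{e_0,e_1,\dots\}$, together with a realization in which $y_j=a_j$ exactly. The residuals will keep a fixed component along $e_0$, so $\|f_m\|\not\to0$.

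\textbf{Construction.} Take $f=e_0+\sum_j a_je_j$ and dictionary elements $\fijt$ chosen as normalized combinations of $e_j$ with a small multiple of $e_0$. The $e_0$-coefficients are tuned so that $\<f^\tau_{j-1},\fijt\>=a_j$ while the $e_0$-component of the residual stays bounded away from $0$. Completeness of $\cD$ is ensured by adjoining the $e_j$ themselves, weighted so that they are never too attractive at any step.

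\textbf{Main obstacle.} The hard step is verifying the greedy condition (\ref{P4}), that is,
$$\sup_{g\in\cD}|\<f^\tau_{n-1},g\>|\le a_n/t_n$$
for every $n$. The hypothesis $a_n\sum_{j\le n}a_j\ge c\,t_n$ is exactly what makes this possible. The inner product of the residual with the most competitive dictionary elements, including the not-yet-used ones and $e_0$-heavy combinations, is of order $(\sum_{j\le n}a_j)^{-1}$ up to the constant $c$. The first attempt will be to choose the $e_0$-weights proportional to the partial sums $\sum_{j\le n}a_j$ and check the inequality term by term. A fallback is to normalize by splitting $\{a_j\}$ into blocks where the partial sums are comparable.
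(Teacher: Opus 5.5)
Your sufficiency argument is the standard one and is essentially correct. The bound $|\langle f^\tau_m,f^\tau_n-f^\tau_m\rangle|\le \frac{y_{m+1}}{t_{m+1}}\sum_{j\le m+1}y_j$ makes $\{f^\tau_{m_k}\}$ Cauchy, and completeness of $\cD$ forces the limit to be $0$. One repair is needed. With your convention $0/0=0$, an index with $t_{m+1}=y_{m+1}=0$ contributes a zero term; this is legitimate, since $t_{m+1}=0$ allows any $\varphi_{m+1}\in\cD$. Your subsequence may consist entirely of such indices, and for them the inequality $\sup_{g\in\cD}|\langle f^\tau_m,g\rangle|\le y_{m+1}/t_{m+1}$ is false. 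Apply (\ref{D1}) instead to $a_j:=y_j+\e_j$ with $\e_j>0$ and $\sum_j\e_j^2<\infty$. Then indices with $t_n=0$ give $+\infty$ and are avoided, every selected index has $\sum_{j\le m+1}a_j\ge\e_1>0$, and since $a_j\ge y_j$ all your estimates go through. This also absorbs the case $y_j\equiv 0$.

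The necessity half has a genuine gap. No dictionary is actually specified, and the verification of (\ref{P4}), which is the entire content of this direction, is left open. Worse, the proposed design cannot work in the essential case.

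Suppose $\langle f^\tau_m,e_0\rangle\ge\delta>0$ for all $m$ and $\cD$ is complete. If $\sup_{g\in\cD}|\langle f^\tau_{m_k},g\rangle|\to0$ along some $m_k\to\infty$, then any weak limit point $u$ of $\{f^\tau_{m_k}\}$ satisfies $\langle u,g\rangle=0$ for all $g\in\cD$. Hence $u=0$, contradicting $\langle u,e_0\rangle\ge\delta$. So $\eta:=\inf_m\sup_{g\in\cD}|\langle f^\tau_m,g\rangle|>0$. The greedy step then gives $y_{m+1}\ge\eta t_{m+1}$, and therefore $\sum_m t_m^2\le\eta^{-2}\|f\|^2<\infty$.

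Failure of (\ref{D1}) does not imply $\sum_k t_k^2<\infty$. Take the nonincreasing sequence $t_k=(\log(k+2))^{-2}$ and $a_k=\sqrt{t_k/k}$. Then $\sum_k a_k^2<\infty$ and $a_n\sum_{j\le n}a_j\ge a_n\sqrt{t_n}\sum_{j\le n}j^{-1/2}\ge t_n$, yet $\sum_k t_k^2=\infty$. When $\sum_k t_k^2<\infty$ no tuning is needed anyway: an orthonormal basis with $f=e_0+\sum_k t_k e_k$, as in the necessity part of Theorem \ref{TT1}, already diverges.

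Two further points in your sketch fail for related reasons.
\begin{itemize}
\item Your heuristic that all competing inner products should be $O\bigl((\sum_{j\le n}a_j)^{-1}\bigr)$, which tends to $0$ when $\sum_j a_j=\infty$, is incompatible with a persistent $e_0$-component, by exactly the argument above.
\item The unit vectors $e_j$ cannot be ``weighted''. Since $\langle f^\tau_{n-1},e_j\rangle=a_j$ for untouched $j>n$, you would need $a_j\le a_n/t_n$, which the hypothesis does not provide.
\end{itemize}

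The missing idea is this. When $\sum_k t_k^2=\infty$, a counterexample must have residuals whose norms stay bounded below but which converge weakly to $0$ along a subsequence. The non-decaying part of the residual has to be continually moved into new directions while its inner products with every element of $\cD$ stay below $a_n/t_n$. The norm lost in doing so must be paid for by $\sum_j a_j^2<\infty$. Any valid counterexample, in particular the one in \cite{VT82}, has to achieve this, and nothing in your sketch supplies it.
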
 

We presented above some sufficient conditions on the weakness sequence $\tau$, which guarantee convergence of the WGA($\tau$) for all $\cD$ and each $f\in H$. There are some criteria (see \cite{VT81}) for convergence of the WGA($\tau$) under extra assumptions on $\tau$. We formulate one of those from \cite{VT81}. 

\begin{Theorem}[{\cite{VT81}}]\label{DT2} Assume that the weakness sequences $\tau =\{t_k\}_{k=1}^\infty$ are monotone $t_1\ge t_2 \ge \dots$. Then, the following condition on the weakness sequences~$\tau$ 
\be\label{D2}
\sum_{k=1}^\infty \frac{t_k}{k} =\infty
\ee
is the necessary and sufficient condition for convergence of the WGA($\tau$).
\end{Theorem}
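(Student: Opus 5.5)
The statement has two halves, and I would treat them separately: sufficiency follows from an earlier theorem, and necessity from the criterion in Theorem \ref{DT1}.

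\emph{Sufficiency.} Suppose (\ref{D2}) holds. Theorem \ref{PT1} then gives convergence of the WGA($\tau$) for every dictionary $\cD$ and every $f\in H$. That theorem does not use monotonicity, so nothing more is needed here.

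\emph{Necessity.} Assume $\tau$ is nonincreasing and $\sum_k t_k/k<\infty$. I would show that condition (\ref{D1}) fails; Theorem \ref{DT1} then says the WGA($\tau$) does not converge. So I need a nonnegative sequence $\{a_j\}\in\ell_2$ with
$$
\liminf_{n\to\infty}\frac{a_n}{t_n}\sum_{j=1}^n a_j>0 .
$$

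First I dispose of the degenerate case. If some $t_N=0$, then by monotonicity $t_n=0$ for all $n\ge N$. The ratio in (\ref{D1}) is then $+\infty$ (or undefined) for every positive sequence, so (\ref{D1}) cannot hold. Equivalently, the algorithm may choose a useless element at every step from $N$ on, so it need not converge.

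Now suppose all $t_n>0$, and set $T_n:=\sum_{j=1}^n t_j$. The choice of $a_n$ comes from the continuous heuristic $S S'=t$, where $S$ plays the role of the partial sums; it gives $S\asymp\sqrt{2T}$. So I define
$$
a_n:=\frac{t_n}{\sqrt{T_n}} .
$$

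Lower bound for the partial sums. Because $T_j\le T_n$ for $j\le n$,
$$
\sum_{j=1}^n a_j\ \ge\ \frac{1}{\sqrt{T_n}}\sum_{j=1}^n t_j=\sqrt{T_n}.
$$
Hence $\frac{a_n}{t_n}\sum_{j\le n}a_j\ge \frac{1}{\sqrt{T_n}}\sqrt{T_n}=1$ for every $n$, so the $\liminf$ is at least $1$.

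Membership in $\ell_2$. This is the only step that uses monotonicity. Since $t_j\ge t_n$ for $j\le n$, we have $T_n\ge n t_n$. Therefore
$$
a_n^2=\frac{t_n^2}{T_n}\le \frac{t_n}{n},
$$
and $\sum_n t_n/n<\infty$ by assumption, so $\{a_n\}\in\ell_2$. Thus (\ref{D1}) fails, and Theorem \ref{DT1} gives divergence for some dictionary and some $f$.

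The main obstacle is choosing the test sequence $\{a_n\}$. Once $a_n=t_n/\sqrt{T_n}$ is in hand, both estimates are one line each. Without monotonicity the bound $T_n\ge nt_n$ fails, and this explains why the criterion (\ref{D2}) requires nonincreasing $\tau$.
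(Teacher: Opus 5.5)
Your proof is correct, but it follows a different route from the one the paper points to. The paper gives no proof of Theorem~\ref{DT2}; it is quoted from \cite{VT81}. The sentence before Theorem~\ref{DT3} indicates that necessity there is proved by constructing an explicit dictionary and element on which some realization of the WGA($\tau$) fails to converge.

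You instead read sufficiency off Theorem~\ref{PT1} and derive necessity from the general criterion, Theorem~\ref{DT1}, using the test sequence $a_n=t_n/\sqrt{T_n}$. Both of your estimates are right: $\frac{a_n}{t_n}\sum_{j\le n}a_j\ge 1$ for every $n$, and monotonicity gives $a_n^2\le t_n/n$.

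In the eventually-zero case, the meaning of (\ref{D1}) depends on a convention for $a_n/0$, so rely on your direct argument, which is sound. Choose $\ff_m=\ff_N$ for all $m>N$. Since $\<f_N,\ff_N\>=0$, the residual freezes at $f_N$. This $f_N$ is nonzero, for instance, for an orthonormal basis and an $f$ with infinitely many nonzero coefficients.

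Your route is short and transparent. Monotonicity enters only through $T_n\ge nt_n$, which shows exactly why Theorem~\ref{DT2} needs it. Also, Theorem~\ref{DT3} has the same condition as Theorem~\ref{DT1}, so your test sequence gives the necessity half of Theorem~\ref{DT4} at once. You do not need to re-examine the $A_1(\cD)$-norm of a particular counterexample.

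The price is that the real work is delegated to the necessity direction of the criterion from \cite{VT82}. That direction is itself a nontrivial counterexample construction for an arbitrary violating $\ell_2$ sequence. The original argument builds the counterexample directly for monotone $\tau$ and does not need that criterion.
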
 

 We now discuss the problem of finding a criterion for the $A_1(\cD)$-convergence of WGA($\tau$). Clearly, sufficient conditions from Theorems \ref{DT1} and \ref{DT2} from above are also sufficient conditions for the $A_1(\cD)$-convergence of WGA($\tau$) for any $\cD$. It turns out that the counterexamples, which were constructed in Theorems \ref{DT1} and \ref{DT2} in the proofs of necessity, have bounded $A_1(\cD)$ norms. This implies the following results for the $A_1(\cD)$-convergence.
 
 \begin{Theorem}\label{DT3} The following condition on the weakness sequences $\tau$ 
\be\label{D3}
\forall \{a_j\} \in \ell_2,\quad a_j\ge 0, j=1,2,\dots,\quad\liminf_{n\to \infty}\frac{a_{n}}{t_{n}}\sum_{j=1}^{n}a_j =0
\ee
is the necessary and sufficient condition for the $A_1(\cD)$-convergence of the WGA($\tau$) for all dictionaries $\cD$.
\end{Theorem}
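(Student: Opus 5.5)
Sufficiency is immediate from Theorem \ref{DT1}. If (\ref{D3}) holds, the WGA($\cD,\tau$) converges for every dictionary $\cD$ and every $f\in H$. In particular it converges for every $f\in A_1(\cD)$, which is exactly $A_1(\cD)$-convergence for all dictionaries. So all the work is in necessity. Here I will not build a counterexample from scratch. Instead I will revisit the construction from the necessity part of Theorem \ref{DT1} in \cite{VT82} and check that it produces an element of bounded $A_1(\cD)$-norm. After rescaling, that element lies in $A_1(\cD)$.

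The plan for necessity is as follows. Assume (\ref{D3}) fails. Then there is a nonnegative $\{a_j\}\in\ell_2$ and a $\delta>0$ with $\frac{a_n}{t_n}\sum_{j=1}^n a_j\ge\delta$ for all large $n$. Following \cite{VT82}, one builds a Hilbert space $H$, a dictionary $\cD$, and an element $f$ together with a realization of the WGA($\cD,\tau$) whose residuals do not tend to zero. In that construction the chosen elements $\ff_n$ have inner products $\<f_{n-1},\ff_n\>$ of size comparable to $a_n$. The greedy inequality (\ref{P4}) is verified using the lower bound $t_n\sup_g|\<f_{n-1},g\>|\le a_n$, and this bound comes from the failure of (\ref{D3}). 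The element $f$ has the form $f=\sum_j a_j\ff_j+(\text{a fixed element }h)$, or is expressed through a fixed combination of dictionary elements such that the residuals stay away from the span.

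The key step is to bound $\|f\|_{A_1(\cD)}$ for this $f$. I will try to exhibit $f$ as a combination of dictionary elements with summable coefficients. The natural candidate is that $f$, or the part that is not removed, is a single dictionary element or a finite combination, such as $h=\psi_0$ of norm one. The dictionary elements $\ff_n$ are then of the form $\alpha_n\psi_0+\beta_n e_n$, and they are tuned so that $f$ itself has the representation $\psi_0\in\cD$. In that case $\|f\|_{A_1(\cD)}\le 1$ trivially. The scale invariance of the WGA lets me replace $f$ by $f/\|f\|_{A_1(\cD)}\in A_1(\cD)$ without changing the realization; only inner products are rescaled and (\ref{P4}) is homogeneous. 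Since the residuals of $f$ do not tend to zero, neither do those of the rescaled element. So $A_1(\cD)$-convergence fails for this $\cD$.

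The main obstacle is this bookkeeping: confirming that in the construction of \cite{VT82} the starting element is indeed a finite (or $\ell_1$-summable) combination of dictionary elements, and not merely an $\ell_2$-type series $\sum a_j\ff_j$ with $\sum a_j=\infty$. If the construction uses such a series, I would try to modify it. One option is to add the starting element $f$ itself, normalized, to the dictionary. One then has to check that this added element never violates the greedy step, i.e. that $|\<f_{n-1},f/\|f\|\>|$ stays below $\sup_g|\<f_{n-1},g\>|$ in the relevant sense. The other option is to reorganize the counterexample so that $f$ is a dictionary element from the start, which I expect is how \cite{VT82} proceeds.
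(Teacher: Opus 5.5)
Your proposal is the paper's argument. Sufficiency is inherited from Theorem~\ref{DT1}, and necessity rests on the fact that the counterexample from the necessity part of Theorem~\ref{DT1} in \cite{VT82} has finite $A_1(\cD)$-norm; the paper asserts this without further detail, and homogeneity of the WGA then lets you rescale that example into $A_1(\cD)$. The ``main obstacle'' you flag is exactly this cited fact, but do not replace it with your guessed shape of the construction ($f=\psi_0\in\cD$, $\ff_n=\alpha_n\psi_0+\beta_n e_n$) or with adjoining $f/\|f\|$ to $\cD$: in the former, testing the greedy inequality against $\psi_0$ and the earlier $\ff_j$ already forces $\sum_n t_n^2<\infty$, which failure of (\ref{D3}) does not give, and the latter needs $|\<f_{n-1},f\>|\le\|f\|\,|\<f_{n-1},\ff_n\>|/t_n$ for every $n$, which is not automatic.
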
 
 
\begin{Theorem}\label{DT4} Assume that the weakness sequences $\tau =\{t_k\}_{k=1}^\infty$ are monotone $t_1\ge t_2 \ge \dots$. Then, the following condition on the weakness sequences~$\tau$ 
\be\label{D4}
\sum_{k=1}^\infty \frac{t_k}{k} =\infty
\ee
is the necessary and sufficient condition for the $A_1(\cD)$-convergence of the WGA($\tau$) for all dictionaries $\cD$.
\end{Theorem}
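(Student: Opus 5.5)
Sufficiency is immediate. If $\tau$ is nonincreasing and $\sum_k t_k/k=\infty$, then Theorem \ref{DT2} gives convergence of the WGA($\tau$) for every dictionary $\cD$ and every $f\in H$. In particular it gives convergence for every $f\in A_1(\cD)$, which is exactly the $A_1(\cD)$-convergence. So all the work is in necessity.

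For necessity, assume $\tau$ is nonincreasing and $\sum_k t_k/k<\infty$. I will revisit the counterexample from the proof of necessity in Theorem \ref{DT2} (in \cite{VT81}). The plan has three steps.

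\textbf{Step 1: recall the construction.} The counterexample produces a dictionary $\cD$, an element $f\ne0$, and a realization of the WGA($\cD,\tau$) with $\inf_m\|f_m\|>0$. In that construction $f$ is written explicitly as a combination $f=\sum_j c_j g_j$ of dictionary elements $g_j\in\cD$, typically an orthonormal family together with some specially built "bad" elements. The coefficients are controlled by $t_k/k$, and their absolute sum is finite precisely because $\sum_k t_k/k<\infty$.

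\textbf{Step 2: bound the $A_1(\cD)$-norm.} Set $M:=\sum_j|c_j|$. Then $f/M\in A_1(\cD)$, so $\|f\|_{A_1(\cD)}\le M<\infty$.

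\textbf{Step 3: rescale.} Replace $f$ by $f/M\in A_1(\cD)$. The WGA is homogeneous, so the scaled version of the same realization is a legitimate realization for $f/M$: the greedy inequality (\ref{P4}) is invariant under scaling. Its residuals are $f_m/M$, which do not tend to zero. Hence the WGA($\tau$) is not $A_1(\cD)$-convergent for this $\cD$.

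The main obstacle is Step 2. One has to go back into the construction of \cite{VT81} and check that the representation of $f$ really uses absolutely summable coefficients, not merely coefficients in $\ell_2$. For a monotone $\tau$ the construction builds $f$ as $\sum_k$ (terms of size about $t_k/k$) along dyadic-type blocks, so the $\ell_1$ sum of coefficients is comparable to $\sum_k t_k/k$.

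If the original construction only has $\ell_2$-type control, I would modify it. I would keep the same geometric mechanism, in which the dictionary elements $\ff_m$ absorb the residual's projection, but choose the coefficients $a_j$ of order $t_j/j$ directly. Monotonicity of $\tau$ makes these coefficients comparable to block averages, and the divergence and convergence of $\sum t_k/k$ then play their respective roles. This gives both $\sum_j a_j<\infty$ and a realization whose residual stays bounded away from $0$.
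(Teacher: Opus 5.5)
Your plan is the paper's argument. Sufficiency is inherited from Theorem \ref{DT2}. Necessity rests on the fact, asserted in the paper rather than re-proved, that the counterexample from the necessity part of Theorem \ref{DT2} already has finite $A_1(\cD)$-norm; your homogeneity rescaling makes explicit what the paper leaves implicit.

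Drop your heuristic and fallback for Step 2, though; they are wrong. In such a counterexample the role of $\sum_k t_k/k<\infty$ is to keep the energy loss $\sum_k y_k^2$ small, where $y_k:=|\<f_{k-1},\ff_k\>|$. A natural witness for the failure of (\ref{D1}) is $a_k=(t_k/k)^{1/2}$, not $t_k/k$. Moreover, the $y_k$ can never be summable when $\sum_k t_k=\infty$, which is allowed here (e.g.\ $t_k=(\log(k+2))^{-2}$). Indeed, $\sum_k y_k<\infty$ and $\|f_k\|\ge c>0$ would give $\|f_{k-1}\|_{A_1(\cD)}\le B:=\|f\|_{A_1(\cD)}+\sum_j y_j$. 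Then (\ref{4.2}) yields $y_k\ge t_kc^2/B$, forcing $\sum_k t_k<\infty$. So choosing the absorbed coefficients of order $t_j/j$ cannot produce a non-convergent realization, and the finiteness of $\|f\|_{A_1(\cD)}$ has to come from how the initial element itself is built.
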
 

We stress that in Theorems \ref{DT3} and \ref{DT4}  we talk about conditions, which guarantee the $A_1(\cD)$-convergence of the WGA($\tau$) for all dictionaries $\cD$. Theorems \ref{DT1} -- \ref{DT4} show (surprisingly) that in the case of WGA($\tau$) the criteria for convergence and for the $A_1(\cD)$-convergence for all dictionaries $\cD$ are the same.

We now make a comment on Theorem \ref{TT1} (also see Remark \ref{TR1a}). This theorem claims that in the case of 
$\cD$ being an orthonormal basis the condition
\be\label{D5}
\sum_{k=1}^\infty t_k =\infty
\ee
gives the criterion for the $A_1(\cD)$-convergence. It is well known (see, for instance, \cite{VTbook}, p.84) and easy to see 
that the condition
\be\label{D6}
\sum_{k=1}^\infty t_k ^2=\infty
\ee
gives the criterion for convergence of the WGA($\tau$) and two other algorithms mentioned in Remark \ref{TR1a} in the case 
of $\cD$ being an orthonormal basis. Clearly, (\ref{D5}) and (\ref{D6}) are very different conditions. 

The most important and technically involved results of this paper are in Sections \ref{tb} and \ref{BS}. On one hand results in those sections are obtained under an extra assumption that the weakness sequence is monotone decreasing. It would be interesting to obtain results on the rate of convergence without the monotonicity assumption. For instance, in the case
$\tau = \tau(\cN,t)$. On the other hand those results cannot be substantially improved. We demonstrate it on Theorem \ref{RCT}.   Corollary \ref{PC1} implies that under the monotonicity assumption we have for the WGA($\tau$):
For $f \in A_1(\cD)$  
\be\label{D7}
\|f_m\| \le \left(1+\sum_{k=1}^m t^2_k\right)^{-\frac{t_m}{2(2+t_m)}}.  
\ee
In a particular case $\tau =\{t\}$ (\ref{D7}) gives
$$
\|f_m\| \le (1+mt^2)^{-\frac{t}{4+2t}}, \quad 0<t\le 1. 
$$
This estimate implies the  inequality 
$$
\|f_m\| \le C_1(t)m^{-at},   
$$
with the exponent $at$ approaching $0$ linearly in $t$. It was proved in \cite{LTe2}   that this exponent cannot decrease to $0$ at a slower rate than linear.

\begin{Theorem}[{\cite{LTe2}}]\label{DT5}   There exists an absolute constant $b>0$ such that, for any $t>0$, we can find a dictionary $\cD_t$ and a function $f_t \in A_1(\cD_t)$ such that, for some realization $G_m^t(f_t,\cD_t)$ of the Weak Greedy Algorithm with weakness parameter $t$, we have
$$
\liminf_{m\to \infty} \|f_t -G_m^t(f_t,\cD_t)\|m^{bt}  >0. 
$$
\end{Theorem}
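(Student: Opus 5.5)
Theorem \ref{DT5} is the converse of Corollary \ref{PC1} in the case $\tau=\{t\}$. I would prove it by building a dictionary and a realization of the WGA($t$) for which every inequality in the proof of Theorem \ref{RCT} (with $b=1$) is nearly an equality. The recursion for the equality case is
$$
a_m=a_{m-1}-y_m^2,\qquad y_m=t\,a_{m-1}/B_{m-1},\qquad B_m=B_{m-1}+y_m .
$$
Put $x_m:=a_m/B_m^2$. By Lemma \ref{HL1}, $x_m\asymp 1/(mt^2)$, and then $y_m/B_{m-1}=t x_{m-1}\asymp 1/(mt)$. Hence $B_m$ grows only like $m^{c}$, while $a_m\le a_{m-1}(1-t y_m/B_{m-1})$ gives $a_m\lesssim B_m^{-t}$. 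Conversely, if the construction forces $y_m/B_{m-1}\ge c'/(mt)$ and $a_m\ge a_{m-1}(1-C t y_m/B_{m-1})$, then $a_m\gtrsim m^{-2bt}$ for an absolute $b$. So the whole task is to realize (\ref{4.5}) and the bound $\|f_{m-1}\|_{A_1(\cD)}\le B_{m-1}$ as near-equalities, with absolute constants.

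For the construction I would work in $\ell_2$ with an orthonormal system split into two parts: a large "flat" block $\{e_i\}_{i=1}^N$ and fresh vectors $u_1,u_2,\dots$.
\begin{itemize}
\item Start from $f=N^{-1}\sum_{i=1}^N e_i$. Then $\|f\|_{A_1}=1$, $\|f\|^2=1/N$ and $\sup_i\<f,e_i\>=1/N$, so (\ref{4.2}) is an equality.
\item At step $m$, add to $\cD$ a unit element $\varphi_m=\alpha_m h_{m-1}+\beta_m u_m$. Here $h_{m-1}$ is the normalized flat part of the current residual, and $\alpha_m$ is tuned so that $\<f_{m-1},\varphi_m\>=t\sup_{g}|\<f_{m-1},g\>|$.
\item Subtracting $\<f_{m-1},\varphi_m\>\varphi_m$ shrinks the flat part by the factor $1-t\cdot(\text{const})/B_{m-1}$. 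It also creates a component along $u_m$ of size $\approx y_m$, which is exactly what raises the $A_1$-norm to $B_m$.
\item The residual then stays a multiple of the flat vector plus a sum of $u$-components. Its $A_1$-norm is bounded below by testing against a suitable norming vector, such as the sum of the flat directions and signed $u_j$'s. This gives $\|f_m\|_{A_1}\gtrsim B_m$, so the sup over the dictionary remains $\lesssim\|f_m\|^2/B_m$.
\end{itemize}

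I would carry out the steps in this order.
\begin{enumerate}
\item Define the dictionary recursively (it is countable) and fix $N$, possibly letting $N=N(t)\to\infty$ or using a limiting infinite flat block.
\item Verify admissibility: at step $m$ no dictionary element has inner product with $f_{m-1}$ exceeding $t^{-1}|\<f_{m-1},\varphi_m\>|$. This covers the $e_i$, the fresh $u_j$, and all earlier $\varphi_j$.
\item Derive the two-sided recursion and solve it as above, obtaining $\|f_m\|\ge c(t)m^{-bt}$ for all $m$, hence the $\liminf$ statement.
\end{enumerate}
Step 2 is the main obstacle. The earlier elements $\varphi_j$ are built on older flat directions and could correlate strongly with later residuals. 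I would first try choosing the $\varphi_j$ so that after step $j$ they are orthogonal to the residual, so that each $\varphi_j$ is used "once". If that fails, I would fall back on bounding their inner products by those of the $e_i$, using that the flat part is shrinking and $\beta_j$ is small.
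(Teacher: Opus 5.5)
The paper does not prove this statement; it quotes it from Livshitz--Temlyakov. Your plan therefore has to stand on its own, and as written it has a genuine gap: the sequence you construct is not a realization of the WGA($t$). Since $h_{m-1}\in\operatorname{span}\{e_i\}$ and $u_j$ occurs only in $\varphi_j$, the $u_j$-coordinate of $f_m$ stays equal to $-\langle f_{j-1},\varphi_j\rangle\beta_j$ for all $m\ge j$. Hence $\|f_m\|\ge|\langle f,\varphi_1\rangle\beta_1|>0$ for every $m$, unless $f_1=0$. But $\sum_k t/k=\infty$, so by Theorem \ref{PT1} every realization of the WGA($t$) converges to zero for every $\cD$ and every $f$. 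So your Step 2 must fail, and you can see exactly where. Put $h=N^{-1/2}\sum_i e_i$, $f_{m-1}=s_{m-1}h+\sum_{j<m}v_ju_j$ and $\varphi_j=\alpha_jh+\beta_ju_j$. Then $s_j=\beta_j^2s_{j-1}$, $v_j=-\alpha_j\beta_js_{j-1}$, and for $m>j$
\[
\langle f_{m-1},\varphi_j\rangle=\alpha_j(s_{m-1}-s_j).
\]
Its modulus grows to $\alpha_j(s_j-\lim_k s_k)$, while $y_m\to0$ because $\sum_m y_m^2\le\|f\|^2$. If instead $\lim_k s_k>0$, the $e_i$ themselves eventually violate (\ref{P4}). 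So the old $\varphi_j$ can be neither kept orthogonal to later residuals nor dominated by the $e_i$. Nor is $\beta_j$ small: your tuning forces $\alpha_j\approx t/\sqrt N$. Two further points: the $u_j$ cannot belong to $\cD$, since their inner products with the residual are frozen; and Step 2 must also control $\varphi_{m'}$ with $m'>m$, because the supremum in (\ref{P4}) is over the whole fixed dictionary.

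The missing idea is what near-equality in (\ref{4.5}) actually demands. Write $\sigma_{m-1}:=\sup_{g\in\cD}|\langle f_{m-1},g\rangle|$ and $y_k:=\langle f_{k-1},\varphi_k\rangle$ (signed). Orthogonality of old elements is the wrong target. If $\langle f_{m-1},\varphi_k\rangle=0$ for all $k<m$, then $\|f_{m-1}\|^2=\langle f_{m-1},f\rangle\le\|f\|_{A_1(\cD)}\sigma_{m-1}$, and Lemma \ref{HL1} gives $\|f_m\|\le(1+mt^2)^{-1/2}\|f\|_{A_1(\cD)}$. That contradicts the claim once $bt<1/2$, and a bound of the same type follows whenever $B_m/B_0$ stays bounded. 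In general,
\[
\|f_{m-1}\|^2=\langle f_{m-1},f\rangle-\sum_{k<m}y_k\langle f_{m-1},\varphi_k\rangle\le B_0\sigma_{m-1}+\sum_{k<m}|y_k|\rho_k,\qquad \rho_k:=-\operatorname{sign}(y_k)\langle f_{m-1},\varphi_k\rangle\le\sigma_{m-1}.
\]
With $y_m=t\sigma_{m-1}$, your hypothesis $a_m\ge a_{m-1}(1-Cty_m/B_{m-1})$ is exactly the condition $\sigma_{m-1}\le C\|f_{m-1}\|^2/B_{m-1}$. Together with $B_{m-1}/B_0\to\infty$, it forces the $|y_k|$-weighted average of the $\rho_k$ to be at least about $\sigma_{m-1}/C$. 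In words, every later residual must stay negatively correlated, at the level of the current maximum, with the bulk of the previously chosen elements, while still tending to zero. The debris created at each step must therefore be re-absorbed by later steps in a controlled, self-similar way. Your proposal has no mechanism for this, and this step cannot be bypassed: it is the step your plan leaves open.

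Smaller issues:
\begin{itemize}
\item ``$\|f_m\|_{A_1}\gtrsim B_m$, hence $\sigma_m\lesssim\|f_m\|^2/B_m$'' is backwards. The inequality $\|h\|^2\le\|h\|_{A_1(\cD)}\sup_g|\langle h,g\rangle|$ only bounds the supremum from below.
\item Lemma \ref{HL1} is only an upper bound. The exact equality recursion gives $x_m\sim1/(t(2+t)m)$, $y_m/B_{m-1}\sim1/((2+t)m)$, $B_m\sim m^{1/(2+t)}$ and $a_m\sim m^{-t/(2+t)}$, matching (\ref{D7}). Your $y_m/B_{m-1}\asymp1/(mt)$ would give $B_m\sim m^{1/t}$.
\item No lower bound on $y_m/B_{m-1}$ is needed. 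For small $t$, $a_m\ge a_{m-1}(1-Cty_m/B_{m-1})$ together with $B_m\le\|f\|_{A_1(\cD)}+\sqrt m\,\|f\|$ already gives $a_m\gtrsim a_0m^{-C't}$.
\end{itemize}
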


 \Addresses
 
\end{document}